\title[Approximate isomorphism of randomization pairs]{Approximate isomorphism of randomization pairs}
\author[J. Hanson]{James Hanson}
\address{James Hanson
  \\ Department of Mathematics \\
  University of Maryland \\
  4176 Campus Dr. \\
  College Park, MD 20742-4015 \\
  USA.}
\author[T. Ibarluc\'ia]{Tom\'as Ibarluc\'ia}
\thanks{Research partially supported by the ANR contract AGRUME (ANR-17-CE40-0026).}
\address{Tom\'as Ibarluc\'ia
\\ Universit\'e Paris Cit\'e \\
  CNRS \\
  IMJ-PRG \\
  F-75006 Paris \\
  France.}
\urladdr{https://webusers.imj-prg.fr/~tomas.ibarlucia}
\g@addto@macro\bfseries{\boldmath}
\def\mbN{\mathbb{N}}
\def\mbZ{\mathbb{Z}}
\def\mbP{\mathbb{P}}
\def\rd{\mathrm{d}}
\def\mcB{\mathcal{B}}
\def\mcL{\mathcal{L}}
\def\mcT{\mathcal{T}}
\def\mcD{\mathcal{D}}
\DeclareMathOperator{\Aut}{Aut}
\DeclareMathOperator{\End}{End}
\DeclareMathOperator{\Sym}{Sym}
\DeclareMathOperator{\acl}{acl}
\newcommand{\eq}{\mathrm{eq}}
\newcommand{\inv}{^{-1}}
\newcommand{\ov}[1]{\overline{#1}}
\newcommand{\actson}{\curvearrowright}
\theoremstyle{plain}        \newtheorem{fact}{Fact}[section]
\theoremstyle{plain}        \newtheorem{theorem}[fact]{Theorem}
\theoremstyle{plain}        \newtheorem{lem}[fact]{Lemma}
\theoremstyle{plain}        \newtheorem{prop}[fact]{Proposition}
\theoremstyle{plain}        
\theoremstyle{definition}   \newtheorem{rem}[fact]{Remark} 
\theoremstyle{definition}   \newtheorem{defin}[fact]{Definition}
\theoremstyle{definition}   \newtheorem*{convention}{Convention}
\theoremstyle{definition}   
\theoremstyle{definition}   
\theoremstyle{definition}   
\theoremstyle{definition}   \newtheorem{question}[fact]{Question}
\theoremstyle{definition}   
\theoremstyle{plain}        \newtheorem*{conjecture}{Conjecture}
\theoremstyle{plain}        \newtheorem*{theorem*}{Theorem}
\theoremstyle{plain}        \newtheorem*{claim*}{Claim}
\begin{document}

\begin{abstract}
We study approximate $\aleph_0$-categoricity of theories of beautiful pairs of randomizations, in the sense of continuous logic.

This leads us to disprove a conjecture of Ben Yaacov, Berenstein and Henson, by exhibiting $\aleph_0$-categorical, $\aleph_0$-stable metric theories $Q$ for which the corresponding theory $Q_P$ of beautiful pairs is not approximately $\aleph_0$-categorical, i.e., has separable models that are not isomorphic even up to small perturbations of the smaller model of the pair. The theory $Q$ of randomized infinite vector spaces over a finite field is such an example.

On the positive side, we show that the theory of beautiful pairs of randomized infinite sets is approximately $\aleph_0$-categorical. We also prove that a related stronger property, which holds in that case, is preserved under various natural constructions, and formulate our guesswork for the general case.
\end{abstract}

\maketitle

\section*{Introduction}

Throughout the paper we let $\mcT$ be the class of $\aleph_0$-categorical, $\aleph_0$-stable, classical (i.e., discrete) first-order theories in a countable language. This class is considered to be particularly well-behaved and well-understood in many respects, thanks to the achievements of the geometric stability theory program from the eighties. Deep results of Cherlin, Harrington, Lachlan, Mills and Zil'ber established, for instance, that every $T\in\mcT$ is one-based and has finite Morley rank, and that the geometry of any strictly minimal set interpretable in $T$ is either disintegrated or that of a projective or affine space over a finite field.

The geometric theory, as it is, breaks down completely in the setting of continuous logic, where appropriate analogs of most of the notions involved are lacking. In \cite{bbhSFB14}, Ben Yaccov, Berenstein and Henson introduced a new notion, \emph{strongly finitely based theories}, containing the class $\mcT$ as well as the metric theories of Hilbert spaces and of probability algebras. We need not recall the original definition, but the following equivalence: a stable $\aleph_0$-categorical metric theory $Q$ is strongly finitely based if and only if the theory $Q_P$ of \emph{beautiful pairs} of models of $Q$ is $\aleph_0$-categorical. They then argued that this property could be an appropriate metric generalization of one-basedness within the class of stable, separably categorical theories. Or almost so.

Indeed, as they pointed out, not every $\aleph_0$-categorical, $\aleph_0$-stable metric theory is strongly finitely based. A counterexample is given by the theory of atomless $L^p$ Banach lattices, for which the corresponding theory of beautiful pairs admits exactly two non-isomorphic separable models. On the other hand, these two models are \emph{approximately isomorphic}, in a natural general sense that we recall in \textsection \ref{ss:beautiful-pairs}. They thus proposed the following:

\begin{conjecture}[{\cite[4.19]{bbhSFB14}}\footnote{Note the shift in the numbering of sections and results in the published version of \cite{bbhSFB14} and the available preprint versions.}]
If $Q$ is an $\aleph_0$-categorical, $\aleph_0$-stable theory in a countable language, then $Q_P$ is approximately $\aleph_0$-categorical.
\end{conjecture}

In \cite{ibaRando}, the second author identified another source of examples of separably categorical, $\aleph_0$-stable theories $Q$ for which $Q_P$ admits several non-isomorphic separable models. It suffices to consider any $T\in\mcT$ with infinite models and let $Q=T^R$ be its \emph{randomization}, in the sense of Ben Yaacov--Keisler \cite{benkei}. We recall this result in \textsection \ref{ss:randomizations} below. The present work initiated as an effort to prove the above conjecture in the case of the theories $Q=T^R$, $T\in\mcT$. With some surprise, it was soon apparent that the conjecture does not always hold in this class.

Let $\mcD\subseteq\mcT$ be the subclass of those $T$ such that every strictly minimal set interpretable in $T$ has a disintegrated geometry. In Section~\ref{s:counterexamples} we show the following.

\begin{theorem*}
If $T\in \mcT\setminus\mcD$, then $(T^R)_P$ is not approximately $\aleph_0$-categorical.
\end{theorem*}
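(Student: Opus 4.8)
The plan is to refute approximate $\aleph_0$-categoricity by producing a family of separable models of $(T^R)_P$ together with an invariant $I$ which (i) separates them up to isomorphism and (ii) is \emph{perturbation-continuous}, in the sense that an $\varepsilon$-perturbation-isomorphism of the smaller models forces the two invariant values to lie within $\delta(\varepsilon)$, where $\delta(\varepsilon)\to 0$ as $\varepsilon\to 0$. Since $(T^R)_P$ is approximately $\aleph_0$-categorical exactly when any two separable models admit an $\varepsilon$-perturbation-isomorphism for every $\varepsilon>0$, a single such $I$ that takes two distinct values $t\neq t'$ already finishes the argument: for $\varepsilon$ with $\delta(\varepsilon)<|t-t'|$ no $\varepsilon$-perturbation-isomorphism can exist.

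First I would reduce to the basic non-disintegrated geometry. As $T\in\mcT\setminus\mcD$, the Cherlin--Harrington--Lachlan--Mills--Zil'ber analysis recalled in the introduction provides a strictly minimal set interpretable in $T$ whose geometry is projective or affine over a finite field $\mbF_k$; through it one interprets the projective geometry over $\mbF_k$, and hence an infinite-dimensional $\mbF_k$-vector space $V$. The reduction step is to show that approximate $\aleph_0$-categoricity of $(T^R)_P$ would descend to the theory $(V^R)_P$ of beautiful pairs of $V^R$: interpretation should commute with randomization and with the passage to beautiful pairs, and a perturbation system for pairs of $T^R$ should restrict to one for pairs of $V^R$, so that an approximate isomorphism upstairs induces one downstairs. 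Verifying that this transfer is clean—despite the geometry being only interpretable, possibly over parameters—is the first technical point; granting it, it suffices to treat $T=\Th(V)$, which is precisely the example announced in the abstract.

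Next I would describe the separable models of $(V^R)_P$ concretely and build many of them. Models of $V^R$ are spaces of random $\mbF_k$-vectors over an atomless probability algebra $\mcA$, and in a beautiful pair $(M,N)$ the generic elements of $M$ over $N$ are governed by random types over $N$: along a measurable partition of the sample space, a generic element may coincide with a vector of $N$ on part of the space and be $\mbF_k$-independent from $N$ on the rest. The freedom in how this genericity is distributed lets me construct a family $\mcP_t$ of pairwise non-isomorphic separable models of $(V^R)_P$, indexed by a parameter $t$ recording the measure of the region on which $M$ genuinely extends $N$. The invariant $I(\mcP)=t$ is isomorphism-invariant but not elementary—it is read off as the measure of a support-type event, a supremum of infinitely many conditions rather than the value of a single sentence—which is consistent with $(V^R)_P$ being a complete theory carrying several separable models.

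The crux, and the main obstacle, is to prove that $I$ is perturbation-continuous with a genuine modulus, and to isolate the finiteness of $\mbF_k$ as the reason. The nontrivial $\mbF_k$-linear relations that a perturbation of the smaller model would have to create or destroy between generic elements form a discrete, isolated configuration: because $\mbF_k$ is finite, any such relation is witnessed by a definable event whose measure is bounded below, so a sufficiently small perturbation of $N$ cannot move $I$ by more than a controlled amount. This quantization is exactly what is absent in the disintegrated class $\mcD$, where generic elements carry no nontrivial definable interaction, the analogous quantity can be smeared by arbitrarily cheap perturbations, and indeed the theory of beautiful pairs of randomized infinite sets is approximately $\aleph_0$-categorical. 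The hard part will be to turn this heuristic into a verified modulus for $I$; once that is in hand, members $\mcP_t\neq\mcP_{t'}$ with $t\neq t'$ fail to be $\varepsilon$-perturbation-isomorphic for small $\varepsilon$, so $(V^R)_P$, and therefore $(T^R)_P$, is not approximately $\aleph_0$-categorical.
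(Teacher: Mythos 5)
Your plan has the right example in view (randomized vector spaces over a finite field, as in the paper's abstract), but it has two genuine gaps. The first is the reduction step. You propose to pass from $T$ to the pure geometry $V$ via interpretation and to transfer approximate $\aleph_0$-categoricity of $(T^R)_P$ down to $(V^R)_P$. Whether the class $\mcT_0$ of theories with approximately $\aleph_0$-categorical $(T^R)_P$ is closed under interpretations is precisely one of the questions the paper leaves \emph{open} at the end of Section~\ref{s:positive-results}, so this step cannot be ``granted.'' The paper avoids it entirely: it never changes the theory, but fixes a strictly minimal set $Q$ presented as a quotient of a set $\varphi(M^n,\bar a)$ in $M^{\meq}$, and carries out all distance estimates on $n$-tuples in $(M^{\Omega^2})^n$ itself, directly contradicting $d_H(R,M^\Omega)\leq\epsilon$ for the hypothetical submodel $R$. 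It also only needs to separate two specific models --- the prime pair $(M^{\Omega^2},M^\Omega)$ and the separable beautiful pair --- rather than a family $\mcP_t$ with a full perturbation-continuous invariant.

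The second and more serious gap is that the quantitative core is missing. Your invariant $I$ and its ``perturbation-continuity'' are exactly what has to be proved, and the mechanism you offer (each nontrivial $\mbF_k$-linear relation is witnessed by a definable event of measure bounded below) does not control the Hausdorff distance: a disintegrated structure also has ``discrete'' definable differences (a point is in the submodel or not), yet there the repair is cheap and $(T^R)_P$ \emph{is} approximately $\aleph_0$-categorical. What actually distinguishes the non-disintegrated case in the paper is a counting/averaging argument special to affine and projective geometries over finite fields: for every $\delta>0$ there is $k$ such that a closed subset of codimension $\geq k$ inside a finite closed set $A$ has size at most $\delta|A|$. Saturation of $M^{\Omega^2}$ over $R$ forces the fibers $R(\alpha)=\acl_Q\{f(\alpha):f\in R_0\}$ to have infinite codimension almost everywhere, so the average $\frac{1}{|Q_i|}\int_C|Q_i\cap R(\alpha)|\,d\mu$ is small, and by pigeonhole some fixed $\bar b/E\in Q_i$ satisfies $\mu\{\alpha: \bar b/E\in R(\alpha)\}<2\delta$; the constant tuple $\bar b$ is then at distance at least $\frac{1}{n}(1-2\delta)-\epsilon-\epsilon'$ from every element of $R$, yielding the absolute bound $\epsilon\geq\frac{1}{2n}$. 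Without this exponential-smallness-of-low-dimensional-closed-sets step (or a substitute for it), your ``quantization'' heuristic does not produce the modulus $\delta(\varepsilon)$ you need, so the proposal as written does not close.
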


For a concrete example, let $V$ be a countably infinite vector space over a finite field, and let $W\subseteq V$ be an infinite subspace with infinite codimension. Let $V^{\Omega^2}\coloneqq L([0,1]^2,V)$ denote the randomized vector space consisting of $V$-valued random variables on the unit square. Let also $W^\Omega$ (respectively, $V^\Omega$) denote the substructure of $V^{\Omega^2}$ consisting of those random variables that depend only on the first coordinate of the unit square and take values in $W$ (resp., in $V$). Then $(V^{\Omega^2},W^\Omega)$ is a separable beautiful pair of randomized vector spaces, and $(V^{\Omega^2},V^\Omega)$ is a separable pair with the same theory, but there is no $\epsilon$-isomorphism between them for any $\epsilon<1$.

On the other hand, if we remove the vector space structure from $V$ and see it just as an infinite set, and see $W$ as an infinite, coinfinite subset, we are able to construct $\epsilon$-isomorphisms between the pairs $(V^{\Omega^2},W^\Omega)$ and $(V^{\Omega^2},V^\Omega)$ for every $\epsilon>0$. In fact, in Section~\ref{s:positive-results} we prove:

\begin{theorem*}
If $T$ is the theory of infinite sets, then $(T^R)_P$ is approximately $\aleph_0$-categorical.
\end{theorem*}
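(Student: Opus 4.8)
The plan is to show that any two separable models of $(T^R)_P$ are $\epsilon$-isomorphic for every $\epsilon>0$; since $T^R$ is $\aleph_0$-categorical and stable, $(T^R)_P$ is a complete theory, so this is exactly approximate $\aleph_0$-categoricity. Recall that an $\epsilon$-isomorphism of pairs is an isomorphism of the ambient randomizations that distorts the predicate $a\mapsto d(a,M)=\inf_{m\in M}d(a,m)$ by at most $\epsilon$. Because $T^R$ is separably categorical, any two separable beautiful pairs $(N_1,M_1)$ and $(N_2,M_2)$ have isomorphic big models, so after identifying $N_1\cong N_2=N$ the task reduces to the following: given the small models $M_1,M_2\preceq N$ of two separable beautiful pairs, produce $\sigma\in\Aut(N)$ with $|d(\sigma a,M_2)-d(a,M_1)|\le\epsilon$ for all $a\in N$. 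I would organize the construction as an approximate back-and-forth between $M_1$ and $M_2$ inside $N$.

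First I would make the structure of the pairs explicit, following \cite{ibaRando}. Writing $N=L(\Omega,V)$ with $\Omega$ an atomless probability space and $V$ a countable infinite set, a beautiful pair presents $N$ as generated generically over $M$ by a fresh independent atomless factor, so that $M$ may be coordinatized as the random variables measurable with respect to a sub-$\sigma$-algebra $\Omega_M$ and taking values in a value set $V_M\subseteq V$. A direct computation gives $d(a,M)=1-\sup_{m\in M}\mu\llbracket a=m\rrbracket$, the supremum being realized by the element of $M$ that selects, over $\Omega_M$, the locally most likely $M$-value of $a$. The key observation is that, since the base theory is pure equality, the only datum of the pair not already fixed by the complete theory $(T^R)_P$ and by separability is the boundary behaviour of this predicate: whether there exist elements genuinely fresh over $M$ (equivalently, whether $V_M$ omits values of $V$), which governs whether suprema such as $\sup_a d(a,M)=1$ are attained. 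This is precisely the invariant separating $(V^{\Omega^2},W^\Omega)$ from $(V^{\Omega^2},V^\Omega)$, and it is invisible to $\epsilon$-perturbations.

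The heart of the argument is then the construction of $\sigma$, and here I would exploit that $V$ carries no structure beyond equality. Given $\epsilon>0$, I would build $\sigma$ from a measure-preserving transformation of $\Omega$ together with a measurable family $(\pi_\omega)_{\omega\in\Omega}$ of permutations of $V$, chosen so that the value structure of $M_2$ is pushed to within total measure $\epsilon$ of that of $M_1$. The point is that an infinite set receives arbitrarily large finite subsets of $V$ injectively: partitioning $\Omega$ into countably many pieces of rapidly decreasing measure summing to at least $1-\epsilon$, one arranges that on all but an $\epsilon$-fraction of $\Omega$ the relabelling $\pi_\omega$ carries the relevant finitely many values of $a$ into the value set dictated by $M_1$, while still respecting whatever genuine omissions $M_2$ requires. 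A limiting back-and-forth, alternately correcting the two models on finite tuples and on finite portions of the value set, should produce an honest automorphism $\sigma$ of $N$ for which the two distance predicates agree up to $\epsilon$.

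The main obstacle I anticipate is upgrading pointwise or finite matching to the required uniform bound $|d(\sigma a,M_2)-d(a,M_1)|\le\epsilon$ over all of $N$ simultaneously, including the generic fresh elements whose best approximation in $M$ is the delicate quantity. Controlling $d(\cdot,M)$ is not continuous in the naive sense, so the estimate has to be obtained through a uniform measure-theoretic approximation—discretizing both $V$ and the probability algebra of $\Omega$ and bounding the error contributed by the tail—rather than by a termwise argument. A secondary point requiring care is to verify that the relabelling construction preserves the beautiful-pair conditions (saturation of the small model and genericity of the large one), so that $\sigma$ genuinely realizes one separable pair as an $\epsilon$-perturbation of the other.
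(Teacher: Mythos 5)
Your reduction is the right one and matches the paper's: by the classification of separable models of $(T^R)_P$ (Theorem~\ref{thm:models-of-TRP}), every separable model is $(V^{\Omega^2},\hat h(V^\Omega))$ for some random injection $\hat h\in\End(V)^{\Omega^2}$, and it suffices to find, for each $\epsilon>0$, a random permutation $\hat g\in\Aut(V)^{\Omega^2}$ with $d(\hat g(f),\hat h(f))<\epsilon$ for all $f\in V^\Omega$. But the proposal stops exactly where the proof has to happen, and the mechanism you sketch for the central construction is not the right one. The elements of the small model are not captured by ``the relevant finitely many values of $a$'': each $f\in V^\Omega$ may take infinitely many values, there are uncountably many such $f$, and the bound must hold uniformly over all of them, so a tail estimate over a countable partition of $\Omega$ of small total measure does not engage the difficulty. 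Moreover, no relabelling $(\pi_\omega)$ measurable with respect to the same coordinate as the small model can work: if $\hat h$ is the constant injection $\tau$ omitting $w_0$, then $f(\omega)\coloneqq\pi_\omega^{-1}(w_0)$ is a measurable element of $V^\Omega$ with $\pi_\omega(f(\omega))=w_0\neq\tau(f(\omega))$ everywhere, so $d(\hat\pi(f),\tau(V^\Omega))=1$. The fresh second coordinate must therefore be used in an essential way, and you never say how; indeed you flag precisely this uniformity problem as ``the main obstacle I anticipate,'' which is an accurate self-diagnosis of a gap rather than a proof.

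The paper closes that gap with two ideas absent from your sketch. First, a combinatorial lemma: for any injection $\tau$ of an infinite set and any $n$ there are bijections $\sigma_0,\dots,\sigma_{n-1}$ such that every point $x$ has $\sigma_i(x)=\tau(x)$ for all but at most one $i<n$ (built by decomposing $X$ into orbits and semi-orbits of $\tau$ and shifting back along $n$ disjoint sparse subsequences of each semi-orbit). Second, an averaging step: $\hat g$ is defined to equal $\sigma_i$ on the slice $\omega'\in[\tfrac{i}{n},\tfrac{i+1}{n})$ of the fresh coordinate, so that any $f$ depending only on the first coordinate meets each $\sigma_i$ with weight $\tfrac1n$ and hence $d(\hat g(f),\hat h(f))\leq\tfrac1n$ \emph{simultaneously for all} $f\in V^\Omega$ --- the discrepancy between a bijection and an injection is spread over the fresh randomness instead of being localized. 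A general $\hat h\in\End(V)^{\Omega^2}$ is then reduced to countably many constant representatives by a Jankov--von Neumann selection, using that $\Aut(V)\backslash\End(V)$ is countable for the pure set. (A minor further inaccuracy: the isomorphism type of the pair is not just ``whether fresh elements exist'' but the distribution of the co-image cardinality of $\hat h$, so there are continuum many separable models; this does not affect your reduction, but the selection step is what legitimately replaces your informal ``coordinatization'' by a fixed value set $V_M$.)
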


To any $T\in\mcT$ we may associate the permutation group $\Aut(M)\actson M$, where $M$ is the countable model of $T$. The previous theorem follows from a stronger property about approximations of random endomorphisms of $M$ by random automorphisms, which holds for infinite sets. We then show that this property is preserved under various permutation group-theoretic constructions, namely direct products and wreath products, as well as finite index supergroups.

The exact extent of the class of $T\in\mcT$ with approximately $\aleph_0$-categorical $(T^R)_P$ remains open, but it seems reasonable to propose the following:

\begin{conjecture}
If $T\in\mcD$, then $(T^R)_P$ is approximately $\aleph_0$-categorical.
\end{conjecture}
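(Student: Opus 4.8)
The plan is to reduce the conjecture to the permutation-group-theoretic approximation property isolated in Section~\ref{s:positive-results}. Recall that to $T\in\mcT$ one associates the action $\Aut(M)\actson M$ on its countable model, and that the relevant property $(\star)$ — every random endomorphism of $M$ is approximable in measure by random automorphisms — implies that $(T^R)_P$ is approximately $\aleph_0$-categorical, holds for infinite sets, holds trivially for finite structures (where elementary self-embeddings are already automorphisms), and is preserved under direct products, wreath products and passage to finite-index supergroups. It therefore suffices to show that, for every $T\in\mcD$, the action $\Aut(M)\actson M$ can be assembled from pure infinite sets and finite structures by finitely many applications of these three operations; property $(\star)$ will then propagate to $\Aut(M)\actson M$ and the conjecture will follow.

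To establish such a decomposition I would argue by induction on Morley rank, using the geometric structure theory of $\aleph_0$-categorical $\aleph_0$-stable theories. Every such $T$ has finite Morley rank and admits a coordinatization by strictly minimal sets, and the hypothesis $T\in\mcD$ forces each of these to be a pure infinite set, so that no affine or projective $\mbF_q$-geometry intervenes; the rank-one case is thus $\Sym(\mbN)\actson\mbN$ together with finite permutation data coming from algebraicity over $\emptyset$, which is harmless for $(\star)$. For the inductive step I would present $M$ as a definable fibration over a structure $M'$ of strictly smaller rank whose fibers are, up to finite data, disintegrated strictly minimal sets. The imprimitivity of this fibration should exhibit $\Aut(M)\actson M$ as a finite-index subgroup of the wreath product of the fiber action with $\Aut(M')\actson M'$, to which the inductive hypothesis applies.

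The main obstacle is controlling the finite covers that appear in the coordinatization together with their binding groups. In a general $\aleph_0$-categorical structure these can be non-split and can carry infinite binding groups, which the three naive operations do not capture; it is exactly here that the disintegration hypothesis must do its work, by ruling out the $\mathrm{GL}_n(\mbF_q)$ and $\mathrm{AGL}_n(\mbF_q)$ binding groups responsible for the counterexamples of Section~\ref{s:counterexamples}. The crux is therefore to prove that, for $T\in\mcD$, every cover in the decomposition contributes only finite permutation data on its fibers, so that it can be absorbed into the wreath and finite-index operations. I would expect to extract this either from the smoothly-approximable / Lie-coordinatization machinery of Cherlin--Hrushovski (all Lie geometries being degenerate in the disintegrated case) or from the earlier structure theory of disintegrated $\aleph_0$-categorical stable structures. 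A secondary task, possibly unavoidable, is that the three operations may not literally suffice: certain tame non-split covers could force one to enlarge the toolkit — for instance to quotients by definable finite equivalence relations or to suitable almost-direct products — and to verify that $(\star)$ remains stable under each added construction. Pinning down a minimal sufficient list of stable operations, and checking stability for each, is the heart of the work.
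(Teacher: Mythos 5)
The statement you are trying to prove is not proved in the paper at all: it is stated there as an open conjecture, and the authors explicitly say that ``the exact extent of the class of $T\in\mcT$ with approximately $\aleph_0$-categorical $(T^R)_P$ remains open.'' Your reduction to the property of having approximable random endomorphisms is the same first step the paper takes (Lemma~\ref{l:app-ran-end-to-TRP}), and your list of preservation results (direct products, wreath products, finite index supergroups, bi-interpretation) matches what is actually proved. But the decomposition you then need --- that every $T\in\mcD$ is assembled from pure infinite sets by finitely many applications of these operations --- is known to be false as stated. By the Lachlan--Bodor result quoted in Section~\ref{s:positive-results}, the closure of the trivial structure under direct products, wreath products with $S_\infty$, and finite index supergroups is \emph{exactly} the class of $\aleph_0$-categorical monadically stable structures, which is a proper subclass of $\mcD$. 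So the inductive scheme you propose cannot terminate in the base cases you allow without enlarging the toolkit, and you do not say how.

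You correctly identify finite covers as the crux, but flagging the crux is not resolving it. Disintegration of the strictly minimal sets does not make finite covers ``harmless'': a finite cover of a disintegrated geometry can be non-split with an infinite (profinite) kernel, and nothing in the paper's machinery handles this --- indeed the paper poses closure of the good class under finite covers as an explicit open question. A further unexamined gap: even granting a coordinatization, you would need the property $(\star)$ for $T\in\mcD$, whereas the conjecture only asserts the (a priori weaker) conclusion of approximate $\aleph_0$-categoricity; it is not known that $(\star)$ holds throughout $\mcD$, so your strategy may be proving something strictly stronger than what is conjectured, and could fail even if the conjecture is true. In short, what you have written is a reasonable research programme --- essentially the one the paper itself gestures at --- but the step ``every $T\in\mcD$ decomposes into the listed operations'' is both unproved and, in the literal form you state it, false.
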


\subsection*{Acknowledgments} Tom\'as would like to thank Jordi Lopez-Abad for an early conversation related to approximate isomorphism of pairs of randomized infinite sets.

\section{Preliminaries}

We assume familiarity with continuous logic; the standard references are \cite{bbhu08,benusv10}. Classical first-order logic is seen as a particular case in the usual way. For simplicity we will consider only one-sorted languages.

\subsection{Elementary pairs and approximate isomorphisms}\label{ss:beautiful-pairs}

Let $\mcL$ be a countable metric language, and let $Q$ be an $\mcL$-theory. An \emph{elementary pair} of models of $Q$ is a couple $(M,N)$ where $N\preceq M$ and $M\models Q$. The class of elementary pairs of models of $Q$ is axiomatizable in the language $\mcL_P=\mcL\cup\{P\}$, where $P$ is a unary predicate that is interpreted as the distance function to the smaller model of the pair. Note that an $\mcL_P$-isomorphism between two elementary pairs $(M,N)$ and $(M',N')$ is the same as an isomorphism  between $M$ and $M'$ that maps $N$ onto $N'$.

\begin{defin}
Let $(M,N)$ and $(M',N')$ be two elementary pairs of models of $Q$. Given $\epsilon>0$, an \emph{$\epsilon$-isomorphism} between $(M,N)$ and $(M',N')$ is an $\mcL$-isomorphism $\sigma\colon M\to M'$ such that, for every $x\in N$,
$$|d(x,N)-d(\sigma(x),N')|\leq\epsilon.$$
The two pairs are \emph{approximately isomorphic} if there exist an $\epsilon$-isomorphism between them for every $\epsilon>0$.

An $\mcL_P$-theory $Q'$ whose models are elementary pairs of models of $Q$ is \emph{approximately $\aleph_0$-categorical} if every two separable models of $Q'$ are approximately isomorphic.
\end{defin}

Informally, two pairs are approximately isomorphic if they are $\mcL_P$-isomorphic \emph{up to arbitrarily small perturbations of the predicate $P$}. It is worth noting that the condition in the definition is equivalent to the inequality
$$d_H(\sigma(N),N')\leq\epsilon,$$
where $d_H$ denotes the Hausdorff distance between sets. Of course, for classical theories an $\epsilon$-isomorphism ($\epsilon<1$) is just an $\mcL_P$-isomorphism.

Approximate isomorphism has been studied in the context of continuous logic by Ben Yaacov in \cite{benYaacovOnpert}, by Ben Yaacov, Doucha, Nies, and Tsankov in \cite{msa}, and by the first author in \cite{hansonAppIso}. The formalisms in these papers are different in non-cosmetic ways, but our current notion of approximate isomorphism is sensible in all three.

We will be interested in so-called beautiful pairs  (\emph{belles paires}) of models, which where first studied by Poizat \cite{poiPaires}. We refer to Ben Yaacov's article \cite[\textsection 4]{benOnuniform} for a treatement in the metric setting.

\begin{defin}
An elementary pair $(M,N)$ is a \emph{beautiful pair} if $N$ is $\aleph_0$-saturated and $M$ is $\aleph_0$-saturated over $N$, i.e., every type over $N$ plus finitely many parameters from $M$ is realized in $M$. (One may wish to broaden the definition by replacing saturation with approximate saturation, as in \cite{benOnuniform}, but for our purposes this is unnecessary.)

The common $\mcL_P$-theory of all beautiful pairs of models of a theory $Q$ will be denoted by $Q_P$.
\end{defin}

If $Q$ is stable and satisfies that every sufficiently saturated model of $Q_P$ is a beautiful pair, then the class of beautiful pairs of models of $Q$ is considered to be well-behaved (called \emph{almost elementary} in \cite{bbhSFB14}, or \emph{weakly elementary} in \cite{benOnuniform}). This condition admits several equivalent (and quite different) formulations, which are the main subject of the seminal work of Poizat; see particularly \cite[Thm.~6]{poiPaires}, or \cite[Thm.~4.4]{benOnuniform} in the continuous setting. On the other hand, this condition is automatically satisfied if $Q$ is stable and $\aleph_0$-categorical, which will be the situation in the present paper.

Note that, in general, there is at most one \emph{separable} beautiful pair of models of $Q$, up to isomorphism. However, as mentioned in the introduction, even for $\aleph_0$-categorical $Q$ there might be more than one separable model of $Q_P$. The first example was given in \cite[Ex.~4.7]{bbhSFB14}, and another family of examples, at the core of the present paper, is discussed after Theorem~\ref{thm:beautiful-pair-of-TR} below. Notwithstanding, the following was highlighted in \cite{bbhSFB14}.

\begin{theorem}
If $T\in\mcT$, then $T_P$ is $\aleph_0$-categorical.
\end{theorem}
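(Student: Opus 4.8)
The plan is to invoke the Ryll--Nardzewski criterion. Since $T$ is classical, the predicate $P$ is $\{0,1\}$-valued (it marks the small model $N$ as a subset of $M$), so $T_P$ is an ordinary first-order theory in $\mcL_P=\mcL\cup\{P\}$, and it is complete because $T$ is stable and $\aleph_0$-categorical, as recalled above. Thus it suffices to show that for every $n$ the space of complete $\mcL_P$-types in $n$ variables is finite; $\aleph_0$-categoricity of $T_P$ then follows, and in particular the separable beautiful pair becomes the unique countable model.

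First I would fix a monster beautiful pair $(M,N)$ and an $n$-tuple $\bar a\in M$, and isolate the finitary data that its $\mcL_P$-type carries. The type records the pure $\mcL$-type $\tp_{\mcL}(\bar a)$, of which there are finitely many by $\aleph_0$-categoricity of $T$, together with the \emph{pattern} telling which coordinates of $\bar a$ lie in $N$, of which there are at most $2^n$. The core input is the standard analysis of types in a stable beautiful pair (Poizat's \emph{belles paires}): the full $\mcL_P$-type of $\bar a$ is determined by this pattern together with the $\mcL$-type $\tp_{\mcL}(\bar a/N)$ of $\bar a$ over the whole small model. So everything reduces to controlling $\tp_{\mcL}(\bar a/N)$ up to finitely many possibilities.

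Here is where the geometric hypotheses on $\mcT$ enter. Since $T$ is stable, $\tp_{\mcL}(\bar a/N)$ is definable and is the nonforking extension of its restriction to its canonical base $\bar c=\Cb(\bar a/N)\in\dcl^{\eq}(N)$; since $T\in\mcT$ is one-based, $\bar c\in\acl^{\eq}(\bar a)$, and since $T$ is $\aleph_0$-categorical this algebraic closure is a \emph{finite} set of imaginaries canonically attached to $\bar a$ (using that $\aleph_0$-categoricity passes to $T^{\eq}$). Consequently $\tp_{\mcL}(\bar a/N)$ is pinned down by (i) $\tp_{\mcL}(\bar a)$, (ii) which elements of the finite set $\acl^{\eq}(\bar a)$ form $\bar c$, together with the fact that they lie in $N^{\eq}$, and (iii) the assertion that the type over $N$ does not fork over $\bar c$ --- all of which range over finitely many options once (i) and the pattern are fixed. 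Combining with the previous paragraph, there are only finitely many $\mcL_P$-types in each $n$, as required.

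The step I expect to be the main obstacle is the reduction in the second paragraph, namely that the $\mcL_P$-type really equals ``pattern $+$ $\mcL$-type over $N$'': one must verify that the definability scheme of $\tp_{\mcL}(\bar a/N)$ over $\bar c$, read off through $N$, recovers the value of every $\mcL_P$-formula at $\bar a$. This is exactly the type-analysis of beautiful pairs in the stable setting, and it is on this point --- rather than on the bookkeeping of canonical bases, which the one-basedness and $\aleph_0$-categoricity of $\mcT$ render routine --- that the whole count rests.
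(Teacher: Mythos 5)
The paper does not actually prove this statement: it is quoted from \cite{bbhSFB14} without proof, so there is no internal argument to compare yours against line by line. That said, your overall strategy --- Ryll--Nardzewski, plus Poizat's analysis reducing the $\mcL_P$-type of $\bar a$ in a beautiful pair to $\tp_{\mcL}(\bar a/N)$, plus one-basedness to localize $\Cb(\bar a/N)$ inside $\acl^{\eq}(\bar a)$ --- is the standard route and is essentially how the result is obtained in \cite{bbhSFB14}. You have also correctly identified the two external inputs (the type analysis of stable beautiful pairs, and the Cherlin--Harrington--Lachlan/Zil'ber theorem that members of $\mcT$ are one-based and of finite Morley rank, which the present paper's introduction licenses you to use).

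There is, however, a genuine gap at the counting step, and it sits exactly where the real content of the theorem lies. You assert that $\acl^{\eq}(\bar a)$ is a \emph{finite} set of imaginaries because ``$\aleph_0$-categoricity passes to $T^{\eq}$''. This is false: $T^{\eq}$ has infinitely many sorts, and $\acl^{\eq}(\bar a)$ (indeed already $\dcl^{\eq}(\emptyset)$) meets infinitely many of them. What $\aleph_0$-categoricity gives is only that $\acl^{\eq}(\bar a)$ intersected with each \emph{single} sort is finite. Consequently your item (ii) --- ``which elements of the finite set $\acl^{\eq}(\bar a)$ form $\bar c$'' ranges over finitely many options --- is unjustified: as $N$ varies, the canonical base $\bar c=\Cb(\bar a/N)$ may a priori live in arbitrarily ``high'' imaginary sorts, so the possible configurations $\tp_{\mcL}(\bar a\bar c)$ are not obviously finite in number. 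To close this you need a uniformity statement: a bound, depending only on $n$, on the sorts (equivalently, on the formulas whose canonical parameters are needed) in which $\Cb(\bar a/N)$ can be coded, so that the basing data of $\tp_{\mcL}(\bar a/N)$ is captured by a tuple of uniformly bounded complexity. That uniform finiteness is precisely the ``strongly finitely based'' property of \cite{bbhSFB14}; it is where finite Morley rank and the coordinatization results actually get used, and it does not follow from ``one-based plus $\aleph_0$-categorical'' by the reasoning you give. (A smaller point in the same step: to recover $\tp_{\mcL}(\bar a/N)$ from $\bar c$ you need not just nonforking over $\bar c$ but stationarity of $\tp_{\mcL}(\bar a/\acl^{\eq}(\bar c))$; this is routine in the $\omega$-stable setting but should be said.)
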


In the classical setting, $\aleph_0$-stability easily entails the existence of a beautiful pair of countable models.\footnote{This also holds in the metric setting (replacing ``countable'' by ``separable''), if one adopts the broader definition of beautiful pairs, i.e., with approximate saturation instead of saturation.} Hence in the previous theorem, the unique countable model of $T_P$ is a beautiful pair.

\subsection{Spaces of random variables} We lay down here some non-standard but convenient notations concerning spaces of random variables, and recall a useful measurable selector theorem.

Throughout this paper, $\Omega$ and $\Omega^2$ will denote the unit interval $[0,1]$ and the unit square $[0,1]^2$, respectively, endowed with their Borel $\sigma$-algebras $\mcB(\Omega)$ and $\mcB(\Omega^2)$, and their corresponding Lebesgue measures. In occasions we may write $\Omega^2=\Omega\times\Omega'$, where $\Omega'$ is a copy of $\Omega$. The measure of any of these spaces will be denoted by $\mu$ or, when it might help to avoid confusion, by $\mu_\Omega$, $\mu_{\Omega^2}$ or $\mu_{\Omega'}$.

If $F$ is some measurable condition involving random variables, $\llbracket F\rrbracket$ will denote the measurable set where the condition holds. The characteristic function of a measurable set $A$ will be denoted by $\chi_A$, or sometimes by $\chi A$ for readability.

Given a bounded, complete metric space $M$, we will denote by $M^\Omega$ the space $L(\Omega,M)$ of measurable functions from $\Omega$ to $M$, up to equality almost everywhere. Similarly, $M^{\Omega^2}\coloneqq L(\Omega^2,M)$. These spaces are equipped with the induced $L^1$ metric, $d(f,f')=\int d(f(\omega),f'(\omega))\rd\mu$, which is also complete. Typically, $M$ will be just a classical, discrete first-order structure, in which case $d(f,f')=\int\chi_{\llbracket {f\neq f'}\rrbracket}\rd\mu =\mu\llbracket {f\neq f'}\rrbracket$.

\begin{convention}
We will canonically identify $\mcB(\Omega)$ with a sub-$\sigma$-algebra of $\mcB(\Omega^2)$, namely the product of $\mcB(\Omega)$ with the trivial algebra on $\Omega'$. Similarly, we will identify $M^\Omega$ with a subset of $M^{\Omega^2}$, namely those random variables in $M^{\Omega^2}=L(\Omega\times\Omega',M)$ that depend only on the first coordinate.
\end{convention}

More generally, if $Z$ is a Polish space, then $Z^\Omega\coloneqq L(\Omega,Z)$ can be equipped with the Polish topology coming from any $L^1$ metric induced by a compatible bounded, complete metric on $Z$. Similarly for $Z^{\Omega^2}$. This will be used with $Z=\Aut(M)$ and $Z=\End(M)$, i.e., for considering the spaces of random automorphisms and random endomorphisms of a given structure.

We now recall the Jankov--von Neumann Uniformization Theorem, in the form that we will use it. See Kechris's book \cite[\textsection 18.A]{kechrisDST}.

\begin{theorem}\label{thm:JvN}
Let $Z$ be a Polish space, and let $E\subseteq \Omega\times Z$ be a Borel subset that projects onto $\Omega$. Then there is a Lebesgue measurable function $f\colon\Omega\to Z$ whose graph is contained in $E$. Idem with $\Omega$ replaced by $\Omega^2$.
\end{theorem}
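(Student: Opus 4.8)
The statement is the classical Jankov--von Neumann uniformization theorem, so strictly speaking one may simply invoke \cite[\textsection 18.A]{kechrisDST}; but here is how I would organize a self-contained argument. Since the conclusion only demands a Lebesgue measurable selector, it suffices to produce one that is measurable with respect to the $\sigma$-algebra of universally measurable subsets of $\Omega$ (this $\sigma$-algebra is contained in the Lebesgue one). The plan is to reduce the Borel set $E$ to a \emph{closed} set in Baire space, replace the problem by that of sectioning a continuous surjection, and finish with a measurable selection theorem. First I would apply the Luzin--Souslin theorem to the Borel subset $E$ of the Polish space $\Omega\times Z$: there is a closed set $C\subseteq\mbN^\mbN$ and a continuous injection $b\colon C\to\Omega\times Z$ with image $E$. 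Composing with the coordinate projections, set $\phi=\mathrm{pr}_\Omega\circ b\colon C\to\Omega$; this is continuous, and since $E$ projects onto $\Omega$ it is surjective.

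The key observation is that a measurable section of $\phi$ suffices. Indeed, if $g\colon\Omega\to C$ is measurable and satisfies $\phi\circ g=\mathrm{id}_\Omega$, then $f\coloneqq\mathrm{pr}_Z\circ b\circ g$ is measurable (both $b$ and $\mathrm{pr}_Z$ are continuous) and its graph lies in $E$, because $b(g(\omega))$ is a point of $E$ whose first coordinate is $\phi(g(\omega))=\omega$, i.e. $(\omega,f(\omega))=b(g(\omega))\in E$. So everything reduces to producing $g$.

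To obtain $g$ I would consider the fibre multifunction $G\colon\Omega\to\mbN^\mbN$ defined by $G(\omega)=\phi^{-1}(\{\omega\})$, whose values are nonempty (by surjectivity) and closed (by continuity of $\phi$ and closedness of $C$). The decisive point is that $G$ is measurable for the universally measurable $\sigma$-algebra: for each basic open $U\subseteq\mbN^\mbN$ one has $\{\omega:G(\omega)\cap U\neq\emptyset\}=\phi(C\cap U)$, which is the continuous image of a Borel set, hence analytic, hence universally (in particular Lebesgue) measurable. Granting this, the Kuratowski--Ryll-Nardzewski selection theorem---applied to the measurable space $(\Omega,\text{universally measurable sets})$, the Polish space $\mbN^\mbN$, and the closed-valued measurable multifunction $G$---yields a measurable selector $g$ of $G$, which by construction sections $\phi$. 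The case of $\Omega^2$ is literally identical: the argument uses only that the domain carries a Borel probability measure, so one replaces $\Omega$ by $\Omega^2$ throughout. The only genuinely nontrivial ingredient---and thus the main obstacle, were one to prove everything from scratch---is the universal measurability of analytic sets, which is exactly what makes the fibre multifunction $G$ measurable and thereby activates the selection theorem; the remaining steps are bookkeeping with continuous maps and projections.
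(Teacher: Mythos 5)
Your argument is correct, but note that the paper does not prove this statement at all: it is quoted as a known result with a pointer to \cite[\textsection 18.A]{kechrisDST}, where the Jankov--von Neumann theorem is established for the more general case of \emph{analytic} $E$ by the ``leftmost branch'' method (write the section $E_\omega$ as the projection of the branches of a tree on $\mbN\times\mbN$ depending measurably on $\omega$, and select the leftmost infinite branch; this yields a $\sigma(\mathbf{\Sigma}^1_1)$-measurable, hence universally measurable, uniformization). Your route is genuinely different and perfectly valid for the Borel case needed here: Luzin--Souslin realizes $E$ as a continuous injective image of a closed $C\subseteq\mbN^\mbN$, the problem becomes sectioning the continuous surjection $\phi=\mathrm{pr}_\Omega\circ b$, the fibre multifunction is closed-valued and weakly measurable for the universally measurable $\sigma$-algebra because $\phi(C\cap U)$ is analytic, and Kuratowski--Ryll-Nardzewski produces the selector. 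The trade-off is that your proof imports two substantial black boxes (Luzin--Souslin and KRN) in place of one (the tree analysis), and it uses Borelness of $E$ essentially at the Luzin--Souslin step, whereas the leftmost-branch proof covers analytic $E$ with no extra cost; on the other hand your decomposition isolates cleanly the single nontrivial measure-theoretic input, namely universal measurability of analytic sets, which is also the engine of the classical proof. Both arguments are indifferent to whether the base is $\Omega$ or $\Omega^2$, as you say.
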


As every Lebesgue measurable function is equal almost everywhere to a Borel measurable function, the theorem yields an element $\tilde{f}\in Z^\Omega$ with $(\omega,\tilde{f}(\omega))\in E$ almost everywhere.

\subsection{Pairs of randomizations}\label{ss:randomizations}

We will only be interested in randomizations of classical first-order structures, as studied in \cite{benkei}; we recall the basics below. For simplicity we opt here for the one-sorted presentation of randomizations (cf.~\cite[Rmk.~2.12]{benkei}), in which the \emph{event sort} is not explicitly included in the language (it becomes an imaginary sort).

Let $\mcL$ be a countable classical language. The corresponding \emph{randomization language} $\mcL^R$ is a metric language containing, for each $\mcL$-formula $\varphi$ in $n$ free variables, a predicate symbol $\mbP[\varphi]$ of arity $n$ that is 1-Lipschitz in each variable. Given a countable $\mcL$-structure $M$, we see $M^\Omega$ (and $M^{\Omega^2}$) as an $\mcL^R$-structure by interpreting, for every $n$-tuple $f$ from $M^\Omega$,
$$\mbP[\varphi](f) \coloneqq \mu\llbracket\varphi(f)\rrbracket = \mu\{\omega\in\Omega : M\models\varphi(f(\omega))\}.$$

For the rest of the section let us fix a theory $T\in\mcT$. The \emph{randomized theory} $T^R$ is the $\mcL^R$-theory of the structure $M^\Omega$, where $M$ is a countable model of $T$. This clearly does not depend on the particular countable model $M$ or the particular atomless Lebesgue space $\Omega$, since changing them results in an isomorphic $\mcL^R$-structure.

\begin{theorem}[\cite{benkei}]
The metric theory $T^R$ is $\aleph_0$-categorical and $\aleph_0$-stable. Moreover, $T^R$ has quantifier elimination.
\end{theorem}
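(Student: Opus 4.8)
The plan is to establish the three assertions of the theorem—$\aleph_0$-categoricity, $\aleph_0$-stability, and quantifier elimination—by working through the randomization formalism directly, using the measure-algebra structure that the predicates $\mbP[\varphi]$ encode. First I would address quantifier elimination, since the other two properties flow from it. The key observation is that, by the Łoś-style computation built into the definition of $\mbP[\varphi]$, the value $\mbP[\varphi](f)$ of a randomized formula on a tuple $f$ from $M^\Omega$ is determined pointwise by the truth values of the underlying classical formula $\varphi$ on the tuple $f(\omega)$. I would therefore argue that the connectives of continuous logic combined with the atomic predicates $\mbP[\varphi]$ for all $\mcL$-formulas $\varphi$ (including those with quantifiers) already generate, up to uniform approximation, every definable predicate; the reduction to \emph{quantifier-free} $\mcL$-formulas then uses that $T$ itself has quantifier elimination after passing to $T^{\eq}$ or, more cheaply, that for $T\in\mcT$ one has $\aleph_0$-categoricity and hence every $\mcL$-formula is equivalent to a quantifier-free one over the relevant expanded language. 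The cleanest route is to verify a back-and-forth criterion: given two saturated models $M^\Omega$ and $N^\Omega$ of $T^R$ and finite tuples with the same quantifier-free $\mcL^R$-type, I would extend any such partial correspondence by one element, which is where the measurable selection comes in.

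Next, for $\aleph_0$-categoricity, I would show that the separable model of $T^R$ is unique up to isomorphism. Since $T\in\mcT$ is $\aleph_0$-categorical, its countable model $M$ is determined, and I would argue that $M^\Omega$ with $\Omega$ an atomless standard Lebesgue space is the prime-and-unique separable model: any separable model of $T^R$ is, by a representation theorem for randomizations, of the form $M^\Omega$ for some separable measure algebra, and the atomless separable measure algebra is unique. The substantive step is producing isomorphisms between two copies $M^\Omega$ and $M^{\Omega'}$ that respect all predicates $\mbP[\varphi]$ simultaneously; here I would lift a measure-algebra isomorphism $\Omega\to\Omega'$ (available since both are atomless and standard) to a map on random variables and check it preserves $\mbP[\varphi]$, invoking Theorem~\ref{thm:JvN} to realize types in the back-and-forth when the underlying measure algebras must be matched against a prescribed finite configuration.

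For $\aleph_0$-stability, the plan is to count types over a separable parameter set. I would reduce, via quantifier elimination, counting types of a single element over a countable $A\subseteq M^\Omega$ to understanding the distribution-valued data $\omega\mapsto\tp(f(\omega)/A(\omega))$, i.e.\ a measurable assignment of classical types of $T$. Because $T$ is $\aleph_0$-categorical and $\aleph_0$-stable, the space of classical types over any finite set is countable and in fact the relevant type space is standard Borel; the randomized type of $f$ is then captured by a Borel probability measure (or a measurable family of classical types) on this type space. Separability of the $d$-metric on the resulting space of such measures yields density character $\aleph_0$ for the type space over a separable set, which is precisely $\aleph_0$-stability in the metric sense.

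The main obstacle I expect is the quantifier-elimination step, specifically controlling the effect of the classical existential quantifier under randomization: showing that $\mbP[\exists y\,\varphi(x,y)](f)$ is approximable by quantifier-free randomized formulas in $f$ requires a uniform witnessing argument across $\omega$, and this is exactly where the Jankov--von Neumann uniformization (Theorem~\ref{thm:JvN}) does the heavy lifting, allowing one to select a measurable witness $y=g(\omega)$ whenever the fiberwise existential holds on a set of prescribed measure. Making the approximation uniform—so that the selected witnesses yield a quantifier-free formula whose value is within $\epsilon$ of $\mbP[\exists y\,\varphi]$—is the technical crux, and I would handle it by discretizing the measure of the witnessing set and applying the selector on each piece.
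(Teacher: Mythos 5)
First, a point of order: the paper does not prove this theorem. It is quoted as background from Ben Yaacov--Keisler \cite{benkei}, so there is no in-paper argument to compare yours against. Judged on its own terms, your plan follows the general shape of the original proofs (quantifier elimination first, then categoricity and stability via a fiberwise analysis of types together with measurable selection), but it contains one genuine misunderstanding, and it sits exactly at the point you single out as the technical crux.

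In the one-sorted randomization language $\mcL^R$ used here there is an atomic predicate symbol $\mbP[\varphi]$ for \emph{every} $\mcL$-formula $\varphi$, quantified or not. Consequently $\mbP[\exists y\,\varphi(x,y)]$ is already atomic, and ``quantifier elimination for $T^R$'' does not ask you to approximate it by randomizations of quantifier-free $\mcL$-formulas; that reduction is neither needed nor possible in general, and your fallback --- that $T$ eliminates quantifiers after passing to $T^{\eq}$, or because it is $\aleph_0$-categorical --- is false as stated (an $\aleph_0$-categorical theory is only quantifier-free axiomatizable after Morleyization, and $T^{\eq}$ does not change this). What actually has to be eliminated are the continuous-logic quantifiers $\sup_g$ and $\inf_g$ ranging over the random-variable sort, and the key identity is $\sup_g \mbP[\psi(x,y)](f,g)=\mbP[\exists y\,\psi(x,y)](f)$, whose nontrivial inequality is precisely where Jankov--von Neumann selection of a measurable witness $g$ with $(\omega,g(\omega))\in\llbracket\exists y\,\psi(f,y)\rrbracket$ enters (this is the ``fullness'' of $M^\Omega$). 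So your instinct about where measurable selection does the work is sound, but it is aimed at the wrong quantifier. The categoricity and stability sketches are essentially right in outline --- separable models are copies of $M^\Omega$ over the unique separable atomless measure algebra, and types over separable sets correspond to random variables valued in the countable classical type spaces, giving a separable type space --- though the ``representation theorem'' you invoke for arbitrary separable models of $T^R$ is itself one of the substantial results of \cite{benkei} and would have to be established by the same back-and-forth machinery rather than taken off the shelf.
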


Given countable models $N\preceq M$ of $T$, the structures $N^\Omega$, $M^\Omega$ and $M^{\Omega^2}$ are separable models of $T^R$ and so, by quantifier elimination, the inclusions $N^\Omega\subseteq M^\Omega\subseteq M^{\Omega^2}$ are elementary.

The following was observed in \cite[Rmk.~5.12]{ibaRando}\footnote{Note the shift in the numbering of sections and results in the published version of \cite{ibaRando} and the available preprint versions.}.

\begin{theorem}\label{thm:beautiful-pair-of-TR}
Let $(M,N)$ be a beautiful pair of countable models of $T$. Then $(M^{\Omega^2},N^\Omega)$ is a beautiful pair of models of $T^R$.
\end{theorem}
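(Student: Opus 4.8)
The plan is to check directly that $(M^{\Omega^2},N^\Omega)$ satisfies the definition of a beautiful pair of models of $T^R$: that it is an elementary pair, that $N^\Omega$ is $\aleph_0$-saturated, and that $M^{\Omega^2}$ is $\aleph_0$-saturated over $N^\Omega$. The first point is free: since $(M,N)$ is a beautiful pair we have $N\preceq M$, and by quantifier elimination for $T^R$ the inclusions $N^\Omega\subseteq M^\Omega\subseteq M^{\Omega^2}$ are elementary, whence $N^\Omega\preceq M^{\Omega^2}$. All the work is in the two saturation statements, and in both the strategy is the same: translate realization of a type, via quantifier elimination, into a condition on the distribution of fiberwise classical types; observe that this distribution is supported on types satisfiable in $N$ (resp.\ in $M$ over $N$) thanks to the saturation built into the pair $(M,N)$; and produce a single random variable realizing it, either by partitioning $\Omega$ or, when the relevant space of types is large, by the measurable selection provided by Theorem \ref{thm:JvN}.

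I would first treat the saturation of $N^\Omega$. Fix a finite tuple $\bar g$ from $N^\Omega$ and a type $p(x)$ over $\bar g$. Since $N$ is countable, $\bar g$ takes only countably many values $\bar b\in N^{|\bar g|}$, each on a measurable set $\Omega_{\bar b}\subseteq\Omega$; and since $T$ is $\aleph_0$-categorical, $S_x(\bar b)$ is finite for each $\bar b$. By quantifier elimination $p$ is equivalent to prescribing the values $\mbP[\varphi(x,\bar g)]$, which amounts to prescribing, on each $\Omega_{\bar b}$, a finitely supported measure on the finite set $S_x(\bar b)$; its consistency forces every type in the support to be realized in the $\aleph_0$-saturated model $N$. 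As $\Omega_{\bar b}$ is atomless of positive measure, one partitions it into measurable pieces of the prescribed masses and defines $f$ to be a realization of the corresponding type on each piece. The resulting $f\in N^\Omega$ realizes $p$, giving \emph{genuine} (not merely approximate) $\aleph_0$-saturation of $N^\Omega$.

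The saturation of $M^{\Omega^2}$ over $N^\Omega$ is the substantial point. Fix a finite tuple $\bar a$ from $M^{\Omega^2}$ and a type $p(x)$ over $N^\Omega\cup\bar a$ to be realized in $M^{\Omega^2}$. The new feature is that the parameter set $N^\Omega$ is an entire submodel, measurable with respect to the first coordinate alone, while $\bar a$ uses both coordinates of $\Omega^2=\Omega\times\Omega'$. Writing the desired element as a random variable $x(\omega,\omega')\in M$, its fiberwise type $\tp\big(x(\omega,\omega')\,/\,N\cup\bar a(\omega,\omega')\big)$ is a classical type over $N$ together with the finitely many parameters $\bar a(\omega,\omega')$ from $M$; because $(M,N)$ is a beautiful pair, $M$ is $\aleph_0$-saturated over $N$, so every such type that arises is realized in $M$. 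Quantifier elimination again reduces $p$ to a prescription of the distribution of these fiber types against the joint distribution of the parameter values; the set $E\subseteq\Omega^2\times M$ of pairs whose second coordinate realizes the prescribed fiber type is Borel and projects onto $\Omega^2$, so the form of Theorem \ref{thm:JvN} over $\Omega^2$ yields a measurable $x\in M^{\Omega^2}$ realizing $p$. Measurable selection, rather than the crude partition of the previous step, is forced here because the fiber types now live over the infinite model $N$ and form a large, non-finite space.

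The crux, and the place where the hypotheses are genuinely used, is the interaction between the disintegration of $p$ and the product structure of $\Omega^2$. Two things must be reconciled: first, that consistency of the $T^R$-type $p$ over $N^\Omega\cup\bar a$ really does unwind into an almost-everywhere satisfiable family of classical types over $N$ in $M$, with enough measurability for $E$ to be Borel; and second, that there is room inside $M^{\Omega^2}$ to realize $p$ at all. This second issue is exactly what the extra coordinate $\Omega'$ resolves: realizing a type over the submodel $N^\Omega$ generally requires the new element $x$ to carry randomness independent from $N^\Omega$ over $\bar a$, and since $N^\Omega$ lives on the first coordinate, the second coordinate $\Omega'$ supplies precisely this independent randomness. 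By contrast $M^\Omega$, being measurable with respect to the same coordinate as $N^\Omega$, would not in general contain such generic elements, which is the structural reason the theorem is stated for $M^{\Omega^2}$ rather than $M^\Omega$. I expect the careful bookkeeping of this disintegration-and-independence step to be the main obstacle, the measurable-selection gluing being comparatively routine once it is in place.
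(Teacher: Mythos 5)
Your high-level strategy---use quantifier elimination to reduce realizing a $T^R$-type to realizing classical types fibrewise, invoke the beautifulness of $(M,N)$ to realize those, and glue measurably---is the same as the paper's, and your treatment of elementarity and of the $\aleph_0$-saturation of $N^\Omega$ is fine (there the relevant type spaces $S_x(\bar b)$ are finite, so everything is elementary). But for the saturation of $M^{\Omega^2}$ over $N^\Omega$, the step you defer as ``careful bookkeeping of this disintegration'' is the entire content of the proof, and your plan skips the one place where $\aleph_0$-stability of $T$ is used. Concretely: writing $C$ for the countable set of values of the parameter tuple $t$ and $A_c=t\inv(c)$, a type $p\in S_1(N^\Omega t)$ does not hand you a single classical fibre type at each point of $\Omega^2$; what it determines, for each $c\in C$, each formula $\psi(x,bc)$ with $b$ from $N$, and each $B\in\mcB(\Omega)$, is the value $\mbP([\psi(x,bc)]\sqcap B\sqcap A_c)$, i.e., a finitely additive assignment on the clopen subsets of the infinite compact type space $X_c=S_1(Nc)$, fibred over the first coordinate. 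One must prove that this assignment is carried entirely by individual types: that there are measures $\nu_q$ on $\mcB(\Omega)$ indexed by $q\in X_c$ such that $\mbP([\psi(x,bc)]\sqcap B\sqcap A_c)=\sum\{\nu_q(B):q\in X_c,\ \psi(x,b)\in q\}$, with no mass escaping to the boundary. This is where the paper works: the identity is proved by induction on the Cantor--Bendixson rank of $\psi$, using that $X_c$ is scattered (hence countable) because $T$ is $\aleph_0$-stable. Without this step your ``prescribed fiber type'' and hence your set $E$ are not defined, and a priori the mass of $p$ could fail to concentrate on any individual types, in which case no function into the countable model $M$ could realize $p$.

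Once the atomic disintegration is in hand, the gluing is not done by Jankov--von Neumann but by an elementary countable partition: choose one realization $a_q\in M$ of each $q\in X_c$ (this is where beautifulness of $(M,N)$ enters), take Radon--Nikodym densities $\delta_q$ of $\nu_q$ with respect to $\mu$, and split each $A_c$ into pieces $A_q$ whose $\Omega'$-slices over $\omega$ have measure $\delta_q(\omega)$---this is exactly where the second coordinate supplies the independent randomness you correctly identify as necessary. A measurable selector would not substitute for the partition, since a selector produces one type per fibre of $\Omega^2$, whereas the disintegration prescribes a whole measure on $X_c$ over each $\omega\in\Omega$. In short, the proposal has the right architecture and the right structural intuition about $\Omega'$, but the theorem's actual mathematical content---the Cantor--Bendixson argument showing the random type is purely atomic---is missing.
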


The aforesaid remark in \cite{ibaRando} sketches how to derive this result by modifying the proof of a more complicated result. For the convenience of the reader, we give a direct and alternative proof of this theorem in the Appendix.

As mentioned in the introduction, there are other separable models of $(T^R)_P$, starting with $(M^{\Omega^2},M^\Omega)$, which is non-beautiful if $M$ is infinite. A useful description of all separable models of $(T^R)_P$ up to isomorphism was also given in~\cite{ibaRando}, which we recall next.

Given a separable structure $M$, let $\Aut(M)$ denote its automorphism group. This is a Polish group under the topology of pointwise convergence. Similarly, we let $\End(M)$ be the semigroup of elementary self-embeddings of $M$, which we call \emph{endomorphisms} for short. It is also Polish with the topology of pointwise convergence.

The semigroup $\End(M)^\Omega$ of random endomorphisms of $M$ acts by endomorphisms on $M^\Omega$, by the formula $(\hat{h}f)(\omega)=\hat{h}(\omega)(f(\omega))$ for $\hat{h}\in\End(M)^\Omega$ and $f\in M^\Omega$. (See \cite[Thm.~3.8, Cor.~3.11]{ibaRando} for a description of $\Aut(M^\Omega)$ and $\End(M^\Omega)$.) Similarly, $\End(M)^{\Omega^2}$ can be seen as a subsemigroup of $\End(M^{\Omega^2})$, and $\Aut(M)^{\Omega^2}$ as a subgroup of $\Aut(M^{\Omega^2})$. The following result is \cite[Thm.~5.6]{ibaRando}.

\begin{theorem}\label{thm:models-of-TRP}
Let $M\models T$ be the countable model of $T$, and let $\hat{h}\in \End(M)^{\Omega^2}\subseteq\End(M^{\Omega^2})$. Then $(M^{\Omega^2},\hat{h}(M^\Omega))$ is a model of $(T^R)_P$. Moreover, every separable model of $(T^R)_P$ is of this form, up to isomorphism.
\end{theorem}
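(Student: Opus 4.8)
The plan is to treat the two assertions separately: the forward implication is soft, resting only on the stability of $T^R$, whereas the converse is the substantial part and is where the uniformization theorem is used. For the first assertion, note that since $\End(M)^{\Omega^2}\subseteq\End(M^{\Omega^2})$, the random endomorphism $\hat h$ acts as an elementary self-embedding of $M^{\Omega^2}$; as $M^\Omega\preceq M^{\Omega^2}$ by our Convention and quantifier elimination, applying $\hat h$ yields $\hat h(M^\Omega)\preceq\hat h(M^{\Omega^2})\preceq M^{\Omega^2}$, so $(M^{\Omega^2},\hat h(M^\Omega))$ is genuinely an elementary pair whose smaller model is a separable model of $T^R$. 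Because $T^R$ is stable and $\aleph_0$-categorical, $(T^R)_P$ is the complete $\mcL^R_P$-theory of beautiful pairs, so it suffices to embed our pair $\mcL^R_P$-elementarily into a beautiful one. I would do this by the standard \emph{belles paires} construction: writing $N_0=\hat h(M^\Omega)$, choose a sufficiently saturated $N_1\succeq N_0$ with $M^{\Omega^2}\ind_{N_0}N_1$, and then a model $M_1\succeq M^{\Omega^2}N_1$ saturated over $N_1$. Nonforking of $N_1$ from $M^{\Omega^2}$ over $N_0$ guarantees that $N_1$ contributes no points closer to $M^{\Omega^2}$ than those already in $N_0$, which is exactly the preservation of the predicate $P$ needed for $(M^{\Omega^2},N_0)\preceq_{\mcL^R_P}(M_1,N_1)$; and $(M_1,N_1)$ is beautiful by construction. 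This trace-preservation is the only delicate point here, and it is a matter of pure stability theory, not of randomizations.

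For the converse, let $(P,N')$ be a separable model of $(T^R)_P$. By $\aleph_0$-categoricity of $T^R$ we may take $P=M^{\Omega^2}$, and $N'$, being a separable model of $T^R$, is isomorphic to $M^\Omega$; so we are handed an elementary embedding $\theta\colon M^\Omega\to M^{\Omega^2}$ with $\theta(M^\Omega)=N'$, and the task is to realize $\theta$ — up to an automorphism of $M^{\Omega^2}$, which only moves $(M^{\Omega^2},N')$ within its $\mcL^R_P$-isomorphism class — as the restriction of a genuine random endomorphism.

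The map $\theta$ induces a measure-algebra embedding of $\mcB(\Omega)$ into $\mcB(\Omega^2)$. Since $N'\subsetneq M^{\Omega^2}$ (the sentence $\sup_x P(x)>0$ belongs to $(T^R)_P$), its image is atomless with atomless complement, so by homogeneity of the separable atomless measure algebra there is a base automorphism $\alpha\in\Aut(M^{\Omega^2})$ with $\alpha\circ\theta$ restricting to the identity on the first-coordinate event algebra $\mcB(\Omega)$. Here I am invoking the decomposition of elementary self-maps of randomizations from \cite[Thm.~3.8, Cor.~3.11]{ibaRando} to isolate and absorb the base part; after replacing $\theta$ by $\alpha\circ\theta$ we may assume $\theta$ fixes first-coordinate events, so that $\theta$ of a first-coordinate simple function splits along those events.

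It remains to produce $\hat h\in\End(M)^{\Omega^2}$ with $\hat hf=\theta f$ for all $f\in M^\Omega$, and for this I would extract $\hat h$ fiberwise from the action of $\theta$ on constants. For each $m\in M$ the image of the constant function $m$ is an element $\theta(m)\in M^{\Omega^2}$, and preservation of every $\mbP[\varphi]$ forces $\llbracket\varphi(\theta(\bar m))\rrbracket$ to be null or conull; running over the countably many pairs $(\varphi,\bar m)$, for almost every $\omega$ the assignment $m\mapsto\theta(m)(\omega)$ is a total injective elementary map, i.e.\ an element of $\End(M)$. The Jankov--von Neumann theorem (Theorem~\ref{thm:JvN}) with $Z=\End(M)$ packages these fibers into a measurable $\hat h\in\End(M)^{\Omega^2}$, and since $\theta$ fixes first-coordinate events one checks that $\hat hf=\theta f$ first on simple functions and then, by density and continuity, on all of $M^\Omega$. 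Thus $\hat h(M^\Omega)=\theta(M^\Omega)=\alpha(N')$, so $\alpha$ is the desired isomorphism of pairs. The main obstacle is precisely this converse: decomposing an abstract elementary embedding of randomizations into an absorbable base automorphism and a fiberwise part, and verifying by measurable selection that the fiberwise part lands in $\End(M)$ almost everywhere; the forward direction, by contrast, is soft.
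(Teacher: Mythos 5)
First, a point of order: the paper does not actually prove this statement --- it is imported verbatim as \cite[Thm.~5.6]{ibaRando} --- so there is no internal argument to measure your attempt against, and it must be judged on its own terms. Judged so, the serious gap is in the direction you dismiss as ``soft''. The belles-paires extension you describe --- take $N_1\succeq N_0$ saturated with $M^{\Omega^2}\ind_{N_0}N_1$, then $M_1\succeq M^{\Omega^2}N_1$ saturated over $N_1$ --- is \emph{not} an $\mcL_P$-elementary extension of an arbitrary elementary pair: applied to the pair $(M,M)$ it would show $(M,M)\models T_P$, which is false, since the beautiful pair produced satisfies $\sup_x P(x)=1$ while $(M,M)$ gives this sentence the value $0$. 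Nonforking of $N_1$ over $N_0$ controls one half of the predicate (new points of $N_1$ staying away from old points of $M^{\Omega^2}$), but says nothing about the new points of $M_1\setminus M^{\Omega^2}$; for the extension to be elementary one must show that $M^{\Omega^2}$ already realizes, up to the approximation visible to continuous formulas, every type over $\hat{h}(M^\Omega)$ together with finitely many parameters from $M^{\Omega^2}$. That is exactly the randomization-specific content of the theorem --- it is what the Appendix establishes, via the Cantor--Bendixson/Radon--Nikodym analysis, in the special case $\hat{h}(M^\Omega)=N^\Omega$ --- and it does not follow from stability alone. Diagnosing this as ``a matter of pure stability theory, not of randomizations'' gets it exactly backwards.

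The converse is closer to a proof: the fiberwise extraction of $\hat{h}$ from the action of $\theta$ on constants, and the verification on simple functions using that first-coordinate events are fixed, are essentially right. But the normalization step is under-justified. To conjugate the induced measure-algebra embedding $\iota\colon\mcB(\Omega)\to\mcB(\Omega^2)$ onto the canonical copy by an automorphism of $\mcB(\Omega^2)$, the relevant invariant is relative: $\mcB(\Omega^2)$ must be relatively atomless (equivalently, of constant relative Maharam type $\aleph_0$) over $\iota(\mcB(\Omega))$. ``Atomless image with atomless complement'' is not a meaningful invariant for a subalgebra, and the axiom $\sup_x P(x)>0$ only rules out $N'=M^{\Omega^2}$; it does not prevent the events generated by $N'$ from exhausting $\mcB(\Omega^2)$, or from having relative atoms, in which cases no such $\alpha$ exists. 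Relative atomlessness does hold in every model of $(T^R)_P$, but it has to be extracted from the theory (e.g.\ from sentences asserting that every finite tuple from $P$ admits an element of the ambient model whose equality events properly split all events generated by that tuple), and this is precisely the point at which modelling $(T^R)_P$, rather than merely being an elementary pair, enters the converse. A minor further remark: Jankov--von Neumann is not needed for the fiberwise map, since each $\theta(m)$ is a single measurable function and so $\omega\mapsto(m\mapsto\theta(m)(\omega))$ is already a measurable map into $\End(M)$ on a conull set.
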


\begin{rem}\ 
\begin{enumerate}
\item For every pair as in the theorem, the map $\hat{h}$ gives an elementary embedding of $(M^{\Omega^2},M^\Omega)$ into that pair. Thus $(M^{\Omega^2},M^\Omega)$ is the prime model of $(T^R)_P$. The fact that $\hat{h}$ is elementary in the language of pairs can be deduced, for instance, from the fact that $(T^R)_P$ eliminates quantifiers in the language expanded with a canonical base map: see \cite[Cor.~4.6]{benOnuniform}.

\item Adapting results and proofs from the literature (cf.~\cite[Thm.~9]{poiPaires}, \cite[Thm~4.4]{benOnuniform}), one should be able to show that the theory $(T^R)_P$ is $\aleph_0$-stable and that the pair $(M^{\Omega^2},N^\Omega)$ from Theorem~\ref{thm:beautiful-pair-of-TR} is precisely its separable, approximately (possibly exactly) $\aleph_0$-saturated model. We shall not need these facts.
\end{enumerate}
\end{rem}

\section{Randomized non-disintegrated minimal sets}\label{s:counterexamples}

Let $M$ be the countable model of some $T\in\mcT$. We recall that a \emph{strictly minimal set} of $M$ is a strongly minimal ($M$-definable) set $Q$ in $M^\eq$ such that $\acl(a)\cap Q=\{a\}$ for every $a\in S$. The algebraic closure operator defines a geometry on the strictly minimal set $Q$, which can be either \emph{disintegrated} (equivalently, $Q$ is an indiscernible set) or the geometry of an affine or projective space over a finite field. See \cite[Thm.~2.1]{cheharlac}, as well as their Appendix~I.

As in the introduction, let $\mcD$ be the class of those theories $T\in\mcT$ such that all corresponding strictly minimal sets are disintegrated.

\begin{theorem}
Suppose $T\in\mcT\setminus\mcD$. Then the separable beautiful pair of models of $T^R$ is not approximately isomorphic to the prime model of $(T^R)_P$. Thus, $(T^R)_P$ is not approximately $\aleph_0$-categorical.
\end{theorem}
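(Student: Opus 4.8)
The plan is to compare the two separable pairs through the description of models of $(T^R)_P$ given by Theorem~\ref{thm:models-of-TRP}. The separable beautiful pair is $(M^{\Omega^2},N^\Omega)$ for a beautiful pair $(M,N)$ of countable models of $T$ (Theorem~\ref{thm:beautiful-pair-of-TR}), which we may write as $(M^{\Omega^2},\hat\iota(M^\Omega))$ for the constant random endomorphism $\hat\iota\equiv\iota$, where $\iota\colon M\to M$ is a self-embedding with image $N$; the prime model is $(M^{\Omega^2},M^\Omega)$. Since both are separable models of $(T^R)_P$, it suffices to show they are not approximately isomorphic. An $\epsilon$-isomorphism from the beautiful pair to the prime model is a $\sigma\in\Aut(M^{\Omega^2})$ with $d_H(\sigma(N^\Omega),M^\Omega)\le\epsilon$; applying $\sigma\inv$ and using the structure of $\Aut(M^{\Omega^2})$ (as in \cite[Cor.~3.11]{ibaRando}), this is equivalent to saying that $N^\Omega$ is approximated, in Hausdorff distance, by the image of $M^\Omega$ under a random automorphism composed with a base transformation. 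So I would reduce the theorem to the assertion that there is $c>0$ such that no such image of $M^\Omega$ comes within Hausdorff distance $c$ of $N^\Omega$.

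Next I would bring in the non-disintegration hypothesis. Since $T\in\mcT\setminus\mcD$, the theorem of Cherlin--Harrington--Lachlan and Zil'ber quoted above yields a strictly minimal set in $M^\eq$ whose geometry is that of a projective or affine space over a finite field $\mbF_q$; this provides an interpretable infinite $\mbF_q$-vector space $\mathbb{V}$, with its definable addition, and the pairs restrict to the corresponding pairs of randomized vector spaces $(\mathbb{V}(M)^{\Omega^2},\mathbb{V}(N)^\Omega)$ and $(\mathbb{V}(M)^{\Omega^2},\mathbb{V}(M)^\Omega)$, on which a putative $\epsilon$-isomorphism induces an approximate isomorphism (up to a Lipschitz constant from the interpretation). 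Passing to $\mathbb{V}$ serves two purposes: the ambient isomorphism is now forced to respect the definable addition, hence to act fibrewise as an $\mbF_q$-linear automorphism of $\mathbb{V}(M)$, and the finite field makes algebraic closure nontrivial. A crude witness of non-isomorphism is already visible here: any constant $v\in\mathbb{V}(M)\setminus\mathbb{V}(N)$ lies at distance exactly $1$ from $\mathbb{V}(N)^\Omega$, whereas, since $\mathbb{V}(M)$ is countable and every probability measure on it has an atom, every element of $\mathbb{V}(M)^{\Omega^2}$ lies at distance strictly less than $1$ from $\mathbb{V}(M)^\Omega$. However, this is not robust under perturbation, and, tellingly, it is equally present when $T\in\mcD$, where the pairs \emph{are} approximately isomorphic; so the entire difficulty lies in robustifying it through the $\mbF_q$-structure.

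The mechanism I would use is that a random automorphism acts fibrewise by genuine $\mbF_q$-linear automorphisms, so its image of the value-full model $\mathbb{V}(M)^\Omega$ stays value-full in each fibre, whereas $\mathbb{V}(N)^\Omega$ is confined, in every fibre, to the proper subspace $\mathbb{V}(N)$ of infinite corank. Combining both Hausdorff directions, the forward direction forces such an automorphism to carry large independent families of constants of $\mathbb{V}(M)$ into $\mathbb{V}(N)$ with high probability, while the reverse direction, together with the fibrewise bijectivity of the automorphism and the independence of the second coordinate $\Omega'$ that supports the extra randomness of $M^{\Omega^2}$ over $M^\Omega$, forces it to also reach generic cosets of $\mathbb{V}(N)$. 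I would quantify the resulting incompatibility by a counting and concentration estimate over $\mbF_q$, yielding a threshold $c=c(q)>0$ below which no $\epsilon$-isomorphism can exist.

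The main obstacle is exactly this last quantification. Its difficulty is that the two pairs share the theory $(T^R)_P$, so no first-order invariant separates them, and essentially all finitary metric invariants coincide as well: the prime model already contains infinite $\mbF_q$-subspaces whose nonzero elements are pairwise at distance $1$ and simultaneously nearly maximally far from $\mathbb{V}(M)^\Omega$, so the naive ``distance to the small model'' and ``norms on subspaces'' invariants are useless. The separation must instead come from a genuinely global statement about how a value-full model sits inside $\mathbb{V}(M)^{\Omega^2}$, namely that random automorphisms cannot, even approximately, transport $\mathbb{V}(M)^\Omega$ onto $\mathbb{V}(N)^\Omega$ --- the precise failure, for the finite-field geometries, of the approximation of random endomorphisms by random automorphisms that does hold for infinite sets. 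Isolating the right global functional and proving it is $c(q)$-stable under perturbation is where I expect the real work to lie; I would also have to treat the affine versus projective cases and the passage through imaginaries carefully, and to check that a beautiful pair $(M,N)$ indeed makes $\mathbb{V}(M)/\mathbb{V}(N)$ infinite-dimensional, so that the constant witnesses above are available.
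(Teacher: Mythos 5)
Your outline points at the right obstruction, and in broad strokes it parallels the paper: pass to a non-disintegrated strictly minimal set $Q$ (a projective or affine geometry over a finite field), view the small model of the would-be $\epsilon$-isomorphic image fibrewise over $\Omega^2$, and try to exhibit a constant function that it cannot approximate. But the proposal is not a proof: the entire quantitative content --- what you yourself defer as ``where the real work lies'' --- is exactly the step that is missing, and it is the heart of the theorem. The missing ingredient is a concrete counting property of the finite-field geometries: for every $\delta>0$ there is $k$ such that whenever $A\supseteq B$ are finite closed subsets of $Q$ with $\dim(A/B)\geq k$, one has $|B|\leq\delta|A|$ (codimension $k$ cuts cardinality by a factor on the order of $q^{-k}$; this is precisely what fails for disintegrated geometries, which is why your ``crude witness'' cannot be robustified there). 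Granting this, the paper's argument is short. If $R$ is the image of the small model of the beautiful pair under a putative $\epsilon$-isomorphism, so that $M^{\Omega^2}$ is $\aleph_0$-saturated over $R$ and $d_H(R,M^\Omega)\leq\epsilon$, then saturation forces the fibrewise closure $R(\alpha)=\acl_Q\{f(\alpha):f\in R_0\}$ to have codimension $\geq k$ in $Q$ almost everywhere, for every $k$; hence $|Q_i\cap R(\alpha)|\leq\delta|Q_i|$ on most of the relevant set for $i$ large, and averaging the quantity $\frac{1}{|Q_i|}\int|Q_i\cap R(\alpha)|\,\rd\mu(\alpha)$ over $\bar b/E\in Q_i$ yields a single constant $\bar b$ with $\mu\{\alpha:\bar b/E\in R(\alpha)\}<2\delta$, whence $d(\bar f,\bar b)$ is bounded below by roughly $\frac 1n$ for every $\bar f\in R$. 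This contradicts $d_H(R,M^\Omega)\leq\epsilon$ once $\epsilon<\frac{1}{2n}$. Without this counting-plus-averaging step you have only restated the problem.

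Two secondary remarks. First, only one direction of the Hausdorff distance is used (constants of $M^\Omega$ must be close to $R$); your proposed mechanism invoking ``both Hausdorff directions,'' fibrewise bijectivity, and generic cosets is more complicated than necessary and does not obviously close. Second, you do not need the structure theorem for $\Aut(M^{\Omega^2})$, nor an interpreted vector space with its definable addition: it suffices to transport the beautiful-pair property through the $\epsilon$-isomorphism to get an elementary $R\preceq M^{\Omega^2}$ over which $M^{\Omega^2}$ is saturated, and to derive the fibrewise infinite codimension of $R(\alpha)$ directly from that saturation via a first-order condition on the measures of the relevant events. This handles the passage through imaginaries and the affine versus projective cases uniformly (local modularity enters only to see that the exceptional sets $C_i$ are increasing in $i$), whereas your route would additionally require checking that the interpretation distorts distances only by a controlled Lipschitz constant.
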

\begin{proof}  
Let $Q$ be a strictly minimal set of the countable model $M$ of $T$. Say $Q$ is the quotient of the set defined by a formula $\varphi(\bar{x},\bar{a})$ by the equivalence relation defined by a formula $E(\bar{x},\bar{y},\bar{a})$. Let $n=|\bar{x}|=|\bar{y}|$.

As mentioned above, the geometry of $Q$ is either that of an infinite dimensional affine space over a finite field or that of an infinite dimensional projective space over a finite field. These geometries all have the property that for any $\delta > 0$ there is a $k\in\mbN$ such that:
\begin{equation}\label{eq:affine-proj-delta-k}\tag{*}
\text{if $A$ and $B$ are finite closed sets with $A \supseteq B$ and $\dim(A/B)\geq k$, then $|B|\leq \delta |A|$.}
\end{equation}

Fix a sequence $\{Q_i\}_{i\in\mbN}$ of non-empty finite closed subsets of $Q$ such that $Q_i \subseteq Q_{i+1}$ and $\bigcup_{i\in\mbN}Q_i = Q$. Clearly it must be the case that $\dim(Q_i)$ grows unboundedly.

Fix $\epsilon>0$, and suppose the pair $(M^{\Omega^2},M^\Omega)$ is $\epsilon$-isomorphic to the separable beautiful pair of models of $T^R$. Hence there is an elementary submodel $R\preceq M^{\Omega^2}$ such that $M^{\Omega^2}$ is $\aleph_0$-saturated over $R$ and $d_H(R,M^\Omega) \leq \epsilon$. Let $\epsilon'>0$ be arbitrary. Fix some $\bar{c}/E\in Q_0$, and choose $\bar{c}' \in R^n$ with $d(\bar{c},\bar{c}') \leq \epsilon+\epsilon'$ (we use the max-distance on finite tuples), where we are conflating $\bar{c}\in M^n$ with the constant function on $\Omega^2$ taking on the value $\bar{c}$. Let $C = \{\alpha \in \Omega^2 : \bar{c} = \bar{c}'(\alpha)\}$. Note that by construction, we have $\mu(C) \geq  1 - n(\epsilon+\epsilon')$.

We take $R_0 \subseteq R$ a countable dense subset such that $\bar{c}'\in (R_0)^n$. Let each element of $R_0$ be represented by a concrete measurable function from $\Omega^2$ to $M$. For any $\alpha \in C$, let
\[
R(\alpha) = \acl_Q\{f(\alpha): f \in R_0\},
\]
where, given $S\subseteq M$, $\acl_Q(S)$ is the set of elements of the quotient $Q$ in the algebraic closure of the $E$-classes of tuples in $S^n\cap \varphi(M^n,\bar{a})$. We claim that for any $k\in\mbN$, $\dim(Q/R(\alpha))\geq k$ almost everywhere. Indeed, since $M^{\Omega^2}$ is saturated over $R\bar{a}$, we can find $\{f_j\}_{j<k}\subseteq \varphi(M^n,\bar{a})^{\Omega^2}$ such that, for every finite subset $F\subseteq R$, the first-order condition
\[
\mu\big\llbracket\dim(\acl_Q(F\cup\{f_j\}_{j<k})/\acl_Q(F)) \geq k \big\rrbracket = 1
\]
holds. The claim follows.

Now let $\delta > 0$ be arbitrary, and find $k$ satisfying the condition (\ref{eq:affine-proj-delta-k}) stated at the beginning of the proof. For each $i\in\mbN$, we consider:
\[
C_i = \{\alpha \in C : \dim(Q_i / (Q_i \cap R(\alpha))) \geq k\}.
\]
Note that each $C_i$ is a measurable set. As $C_i\subseteq C$, for every $\alpha\in C_i$ we have $\bar{c}/E\in Q_i\cap R(\alpha)$, so this intersection is non-empty. Thus, since the geometry is locally modular, we have $C_{i+1} \supseteq C_i$ for every $i\in\mbN$. Indeed, if we take $\alpha\in C_i$ and denote $\tilde{R}_i(\alpha)=Q_i\cap R(\alpha)$, then:
\begin{align*}
k & \leq \dim(Q_i / \tilde{R}_i(\alpha))) = \dim(Q_i)-\dim(Q_i\cap \tilde{R}_{i+1}(\alpha)) = \dim(Q_i\cup \tilde{R}_{i+1}(\alpha))- \dim(\tilde{R}_{i+1}(\alpha)) \\ & \leq \dim(Q_{i+1})-\dim(\tilde{R}_{i+1}(\alpha)) = \dim(Q_{i+1}/\tilde{R}_{i+1}(\alpha)).
\end{align*}
Since $R(\alpha)$ has codimension at least $k$ almost everywhere, we have $C=\bigcup_{i\in\mbN}C_i$ up to measure zero. In particular, there is $i\in\mbN$ such that $\mu(C_i) > (1-\delta)\mu(C)$. Consider the quantity
  \[
   I = \frac{1}{|Q_i|} \int_C |\tilde{R}_i(\alpha)|d\mu(\alpha) \leq \int_{C_i} \frac{|\tilde{R}_i(\alpha)|}{|Q_i|}d\mu(\alpha) + \mu(C\setminus C_i).
  \]
By choice of $k$, the integral over $C_i$ can be no larger than $\delta\mu(C_i)$, and by choice of $i$, $\mu(C\setminus C_i)<\delta\mu(C)$. Hence $I < 2\delta$. On the other hand, we note that
\[
I = \frac{1}{|Q_i|} \sum_{q \in Q_i} \mu\{\alpha \in C : q \in R(\alpha)\}.
\]
So there must be some $\bar{b}/E \in Q_i$ such that $\mu\{\alpha \in C : \bar{b}/E \in R(\alpha)\} < 2\delta$. Let us identify $\bar{b}\in M^n$ with the tuple in $(M^{\Omega})^n = (M^n)^\Omega$ constantly equal to $\bar{b}$.

We have that, for any $\bar{f}\in (R_0)^n$,
  \[
  d(\bar{f},\bar{b}) \geq \frac{1}{n}\mu\llbracket \bar{f}\neq \bar{b}\rrbracket \geq \frac{1}{n}\mu\{\alpha\in C: \bar{b}/E\notin R(\alpha)\} > \frac{1}{n}(\mu(C) - 2\delta) \geq \frac{1}{n}(1 - 2\delta)-\epsilon -\epsilon',
  \]
  but this results in a contradiction with the condition $d_H(R,M^\Omega)\leq\epsilon$ whenever $\epsilon < \frac{1}{n}(1 - 2\delta)-\epsilon -\epsilon'$. Since $\delta$ and $\epsilon'$ were arbitrary, we conclude that any $\epsilon$-isomorphism between $(M^{\Omega^2},M^\Omega)$ and the separable beautiful pair of models of $T^R$ has to satisfy:
\[
\epsilon \geq \frac{1}{2n}.
\]
The result follows. We may remark that if the geometry of $Q$ is modular then it is unnecessary to consider the set $C$, and the above bound becomes $\epsilon\geq\frac{1}{n}$.
\end{proof}

\section{Approximating endomorphisms by automorphisms}\label{s:positive-results}

As in the previous section, we let $T$ be some theory in $\mcT$ and $M$ be its countable model.

\begin{defin}
We will say that $h\in \End(M)$ is \emph{approximable by automorphisms} if for every $\epsilon>0$ there are $n\in\mbN$ and $g_0,\dots,g_{n-1}\in\Aut(M)$ such that, for every $a\in M$,
$$|\{i<n : g_i(a)\neq h(a)\}|<\epsilon n.$$
We will say $M$ has \emph{approximable endomorphisms} if every $h\in\End(M)$ is approximable by automorphisms.
\end{defin}

\begin{prop}\label{p:one-approximable-endo}
If $h\in\End(M)$ is approximable by automorphisms, then $(M^{\Omega^2},M^\Omega)$ and $(M^{\Omega^2},h(M)^\Omega)$ are approximately isomorphic.
\end{prop}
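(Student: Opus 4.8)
The plan is to produce, for each $\epsilon>0$, an explicit automorphism $\sigma\colon M^{\Omega^2}\to M^{\Omega^2}$ sending $M^\Omega$ to within Hausdorff distance $\epsilon$ of $h(M)^\Omega$; by the remark following the definition of $\epsilon$-isomorphism, the bound $d_H(\sigma(M^\Omega),h(M)^\Omega)\le\epsilon$ is precisely what makes $\sigma$ an $\epsilon$-isomorphism of the pairs $(M^{\Omega^2},M^\Omega)$ and $(M^{\Omega^2},h(M)^\Omega)$, and letting $\epsilon\to 0$ then gives approximate isomorphism. The guiding idea is to use the second coordinate $\Omega'$ of the square to \emph{spread out and average} finitely many automorphisms approximating $h$, exploiting that an element of $M^\Omega$ is constant along $\Omega'$.

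First I would fix $\epsilon>0$ and apply approximability of $h$ to get $n\in\mbN$ and $g_0,\dots,g_{n-1}\in\Aut(M)$ with $|\{i<n:g_i(a)\neq h(a)\}|<\epsilon n$ for every $a\in M$. Partition $\Omega'$ into Borel sets $\Omega'_0,\dots,\Omega'_{n-1}$ of measure $1/n$, and define $\hat g\in\Aut(M)^{\Omega^2}$ by $\hat g(\omega,\omega')=g_i$ whenever $\omega'\in\Omega'_i$ (so $\hat g$ depends only on the second coordinate). Being a simple function it is measurable, so under the inclusion $\Aut(M)^{\Omega^2}\subseteq\Aut(M^{\Omega^2})$ recalled above it acts as an automorphism $\sigma\coloneqq\hat g$ of $M^{\Omega^2}$. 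I also set $\hat h\in\End(M)^{\Omega^2}$ to be the constant random endomorphism equal to $h$, so that $\hat h(M^\Omega)=h(M)^\Omega$.

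The key estimate is $d(\sigma(f),\hat h(f))<\epsilon$ for every $f\in M^\Omega$. Since $f$ depends only on the first coordinate, for fixed $\omega$ the value $a=f(\omega)$ is constant in $\omega'$, with $(\sigma f)(\omega,\omega')=g_i(a)$ on $\Omega'_i$ and $(\hat h f)(\omega,\omega')=h(a)$; hence the inner integral over $\Omega'$ of $\chi_{\llbracket\sigma f\neq\hat h f\rrbracket}$ equals $\tfrac1n|\{i<n:g_i(a)\neq h(a)\}|<\epsilon$, uniformly in $a$, and Fubini yields the claim. As $\hat h(f)\in h(M)^\Omega$, this gives $\sup_{f\in M^\Omega}d(\sigma(f),h(M)^\Omega)\le\epsilon$, one side of the Hausdorff bound. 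For the other side, given $z\in h(M)^\Omega$ I would use that $h\colon M\to h(M)$ is onto and that $M$ is countable to fix a section $s\colon h(M)\to M$ with $h\circ s=\mathrm{id}$; then $f\coloneqq s\circ z\in M^\Omega$ satisfies $\hat h(f)=z$, so $d(\sigma(f),z)<\epsilon$. Combining both sides gives $d_H(\sigma(M^\Omega),h(M)^\Omega)\le\epsilon$, as required.

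I do not expect a serious obstacle once the averaging device over $\Omega'$ is found; the two points needing a little care are checking that $\sigma$ genuinely lies in $\Aut(M^{\Omega^2})$ (supplied by the embedding $\Aut(M)^{\Omega^2}\hookrightarrow\Aut(M^{\Omega^2})$) and establishing the reverse Hausdorff inequality, for which surjectivity of $h$ onto $h(M)$ together with countability of $M$ is exactly what lets every element of $h(M)^\Omega$ be written as $\hat h(f)$. The essential mechanism — and the reason the randomization is taken over the square rather than the interval — is that the independent second coordinate lets one replace the single endomorphism $h$ by a measurable mixture of the automorphisms $g_i$ whose pointwise error is controlled uniformly over all of $M$.
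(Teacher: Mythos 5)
Your proof is correct and follows essentially the same route as the paper's: both define $\hat g\in\Aut(M)^{\Omega^2}$ by spreading the approximating automorphisms $g_0,\dots,g_{n-1}$ over an $n$-part partition of the second coordinate and then estimate $d(\hat g(f),\hat h(f))<\epsilon$ for $f\in M^\Omega$ via the same Fubini computation. The only difference is that you spell out the reverse Hausdorff inequality via a section of $h$, a detail the paper leaves implicit in its ``thus, in particular'' step.
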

\begin{proof}
Given $\epsilon>0$, we choose $g_0,\dots,g_{n-1}$ as in the definition. We then define $\hat{g}\in \Aut(M)^{\Omega^2}\subseteq \Aut(M^{\Omega^2})$ by:
$$\hat{g}(\omega,\omega')=g_i\text{ iff }\omega'\in \left[\frac{i}{n},\frac{i+1}{n}\right).$$
We also consider the elementary self-embedding $\hat{h}$ of $M^{\Omega^2}$ defined by $\hat{h}(f)=h\circ f$, and observe that $h(M)^\Omega=\hat{h}(M^\Omega)$.

Now, given any $f\in M^\Omega\subseteq M^{\Omega^2}$, we have:
$$d(\hat{g}(f),\hat{h}(f)) = \mu_{\Omega^2}\llbracket \hat{g}(f)\neq \hat{h}(f)\rrbracket =  \sum_{i<n} \frac{1}{n} \mu_\Omega\llbracket g_if\neq hf\rrbracket = \int_\Omega\frac{1}{n}\sum_{i<n}\chi{\llbracket g_if\neq hf\rrbracket}\rd\omega < \epsilon,$$
since $\sum_{i<n}\chi{\llbracket g_if\neq hf\rrbracket}(\omega) = |\{i<n : g_i(f(\omega))\neq h(f(\omega))\}|<\epsilon n$. Thus, in particular,
$$d_H(\hat{g}(M^\Omega),h(M)^\Omega) < \epsilon,$$
and we see that $\hat{g}$ is an $\epsilon$-isomorphism between the pairs of the statement.
\end{proof}

Our way of establishing approximate $\aleph_0$-categoricity of $(T^R)_P$ in certain cases will be through the following seemingly stronger property.

\begin{defin}\label{def:app-random-endo}
We will say that $M$ has \emph{approximable random endomorphisms} if for every $\hat{h}\in\End(M)^{\Omega^2}$ and $\epsilon>0$ there is $\hat{g}\in\Aut(M)^{\Omega^2}$ such that, for every $f\in M^\Omega$, $d(\hat{g}(f),\hat{h}(f))<\epsilon$.
\end{defin}

Note well that the approximation condition only concerns the elements of the submodel $M^\Omega\subseteq M^{\Omega^2}$.

\begin{lem}\label{l:app-ran-end-to-TRP}
If $M$ has approximable random endomorphisms, then $(T^R)_P$ is approximately $\aleph_0$-categorical.
\end{lem}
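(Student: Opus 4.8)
The plan is to reduce the statement to the case of pairs built from random endomorphisms, which is exactly the form given by Theorem~\ref{thm:models-of-TRP}. By that theorem, every separable model of $(T^R)_P$ is isomorphic to one of the form $(M^{\Omega^2},\hat{h}(M^\Omega))$ for some $\hat{h}\in\End(M)^{\Omega^2}$. Since approximate isomorphism is an equivalence relation (symmetry and the triangle inequality for the Hausdorff distance $d_H$ are routine), it suffices to show that each such model is approximately isomorphic to a single fixed reference model, for which the natural choice is the prime model $(M^{\Omega^2},M^\Omega)$. Thus the entire task collapses to proving: for every $\hat{h}\in\End(M)^{\Omega^2}$, the pair $(M^{\Omega^2},\hat{h}(M^\Omega))$ is approximately isomorphic to $(M^{\Omega^2},M^\Omega)$.

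First I would fix $\epsilon>0$ and invoke the hypothesis that $M$ has approximable random endomorphisms (Definition~\ref{def:app-random-endo}) to obtain $\hat{g}\in\Aut(M)^{\Omega^2}$ with $d(\hat{g}(f),\hat{h}(f))<\epsilon$ for every $f\in M^\Omega$. The point of working inside $\Aut(M)^{\Omega^2}\subseteq\Aut(M^{\Omega^2})$ is that $\hat{g}$ is then a genuine $\mcL^R$-automorphism of $M^{\Omega^2}$, i.e., an $\mcL^R$-isomorphism of the ambient model to itself. I would then check that $\hat{g}$ is an $\epsilon$-isomorphism between the two pairs in the sense of the definition in \textsection\ref{ss:beautiful-pairs}: it is an $\mcL^R$-isomorphism of $M^{\Omega^2}$, so it remains to control the predicate $P$, i.e., to bound $d_H(\hat{g}(M^\Omega),\hat{h}(M^\Omega))$. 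Since $\hat{h}(M^\Omega)$ is the distinguished submodel of the target pair, this is precisely the quantity governing the $\epsilon$-isomorphism condition.

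The Hausdorff bound follows directly from the pointwise estimate. For one direction, every element of $\hat{g}(M^\Omega)$ has the form $\hat{g}(f)$ with $f\in M^\Omega$, and $d(\hat{g}(f),\hat{h}(f))<\epsilon$ exhibits a point of $\hat{h}(M^\Omega)$ within $\epsilon$; the symmetric direction is identical, giving $d_H(\hat{g}(M^\Omega),\hat{h}(M^\Omega))\leq\epsilon$. As observed in \textsection\ref{ss:beautiful-pairs}, this Hausdorff inequality is equivalent to the defining inequality for an $\epsilon$-isomorphism, so $\hat{g}$ witnesses that $(M^{\Omega^2},M^\Omega)$ and $(M^{\Omega^2},\hat{h}(M^\Omega))$ are $\epsilon$-isomorphic. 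Letting $\epsilon\to 0$ shows these two pairs are approximately isomorphic.

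Combining the above with the reduction from the first paragraph, any two separable models of $(T^R)_P$ are each approximately isomorphic to the prime model $(M^{\Omega^2},M^\Omega)$, hence to each other, which is exactly approximate $\aleph_0$-categoricity. I expect no serious obstacle here: the substance of the argument is entirely absorbed into the hypothesis of approximable random endomorphisms and into Theorem~\ref{thm:models-of-TRP}, so the only points requiring care are the bookkeeping that $\hat{g}\in\Aut(M)^{\Omega^2}$ really is an $\mcL^R$-automorphism of the full structure $M^{\Omega^2}$ (so that it qualifies as an $\mcL^R$-isomorphism of pairs in the required sense) and the routine verification that approximate isomorphism is transitive. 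The genuinely hard work lies in verifying the hypothesis for specific $M$ (e.g., infinite sets), which is the business of the remainder of the section rather than of this lemma.
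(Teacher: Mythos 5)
Your proof is correct and takes essentially the same route as the paper's: obtain $\hat{g}$ from Definition~\ref{def:app-random-endo}, observe that $d_H(\hat{g}(M^\Omega),\hat{h}(M^\Omega))\leq\epsilon$ makes $\hat{g}$ an $\epsilon$-isomorphism onto $(M^{\Omega^2},\hat{h}(M^\Omega))$, and invoke Theorem~\ref{thm:models-of-TRP}. The only difference is that you spell out the reduction to the prime model and the transitivity of approximate isomorphism, which the paper leaves implicit.
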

\begin{proof}
If $\hat{h}$ and $\hat{g}$ are as in Definition~\ref{def:app-random-endo}, then $d_H(\hat{g}(M^\Omega),\hat{h}(M^\Omega))\leq\epsilon$, so $\hat{g}$ is an $\epsilon$-isomorphism between $(M^{\Omega^2},M^\Omega)$ and $(M^{\Omega^2},\hat{h}(M^\Omega))$. Now we apply Theorem~\ref{thm:models-of-TRP}.
\end{proof}

For the next result we will consider the natural left action $\Aut(M)\actson \End(M)$ and the corresponding orbit set, $O(M)\coloneqq\Aut(M)\backslash\End(M)$.

\begin{prop}\label{prop:TRP-approx-a0-cat-sufficient-conditions}
Suppose $M$ has approximable endomorphisms, and that the orbit set $O(M)$ is countable. Then $M$ has approximable random endomorphisms.
\end{prop}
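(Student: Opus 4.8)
The plan is to lift the pointwise construction of Proposition~\ref{p:one-approximable-endo} to a measurable, fiberwise construction over $\Omega^2$, using the second coordinate $\omega'$ as the randomizing direction and the countability of $O(M)$ to organize the approximation data. Fix $\hat h\in\End(M)^{\Omega^2}$ and $\epsilon>0$. First I would record that each $\Aut(M)$-orbit in $\End(M)$ is Borel: the orbit $O_k$ of an endomorphism $h_k$ is the continuous image of $\Aut(M)$ under $\gamma\mapsto\gamma\circ h_k$, hence analytic, and since there are only countably many orbits, the complement of any one of them is a countable union of analytic sets and so analytic as well; by Souslin's theorem each orbit is therefore Borel. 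Choosing representatives $\{h_k\}_{k\in\mbN}$, one per orbit, the sets $A_k=\hat h\inv(O_k)$ form a measurable partition of $\Omega^2$ (up to a null set).

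Next I would untwist $\hat h$ by a measurable family of automorphisms. Let $H\colon\Omega^2\to\End(M)$ be the countably valued, hence measurable, map equal to $h_k$ on $A_k$. The set $E=\{((\omega,\omega'),\gamma):\gamma\circ H(\omega,\omega')=\hat h(\omega,\omega')\}$ is Borel and, since $\hat h(\omega,\omega')$ and $H(\omega,\omega')$ lie in the same orbit for a.e.\ $(\omega,\omega')$, it projects onto $\Omega^2$ up to a null set. By the Jankov--von Neumann Uniformization Theorem (Theorem~\ref{thm:JvN}) there is a measurable $\gamma\colon\Omega^2\to\Aut(M)$ with $\hat h(\omega,\omega')=\gamma(\omega,\omega')\circ h_k$ for a.e.\ $(\omega,\omega')\in A_k$. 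For each $k$, using that $M$ has approximable endomorphisms, I fix $n_k$ and $g_0^k,\dots,g_{n_k-1}^k\in\Aut(M)$ witnessing approximability of $h_k$ for the error $\epsilon$, so that $|\{i<n_k:g^k_i(a)\neq h_k(a)\}|<\epsilon n_k$ for \emph{every} $a\in M$.

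The crux is then to choose, measurably, an index function $\iota\colon\Omega^2\to\mbN$ with $\iota<n_k$ on $A_k$ that equidistributes over $\{0,\dots,n_k-1\}$ inside each fiber. Writing $A_k^\omega$ for the $\omega$-fiber of $A_k$ and $m_k(\omega)=\mu_{\Omega'}(A_k^\omega)$, I would set $\Phi_k(\omega,\omega')=\mu_{\Omega'}(A_k^\omega\cap[0,\omega'])$, which is jointly measurable by Fubini and, for fixed $\omega$, pushes Lebesgue measure on $A_k^\omega$ forward to Lebesgue measure on $[0,m_k(\omega)]$; then $\iota=\min(\lfloor n_k\Phi_k/m_k\rfloor,\,n_k-1)$ on $A_k$ (and $\iota=0$ on the null set where $m_k(\omega)=0$) assigns each value $i<n_k$ to a subset of $A_k^\omega$ of measure exactly $m_k(\omega)/n_k$. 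Define $\hat g(\omega,\omega')=\gamma(\omega,\omega')\circ g^k_{\iota(\omega,\omega')}$ on $A_k$; this is a random automorphism, being the composition of the measurable $\gamma$ with a countably valued measurable map. Since $\gamma(\omega,\omega')$ is injective, for $f\in M^\Omega$ one has $\hat g(\omega,\omega')(f(\omega))\neq\hat h(\omega,\omega')(f(\omega))$ iff $g^k_{\iota}(f(\omega))\neq h_k(f(\omega))$, so by Fubini and the equidistribution property,
\[
d(\hat g(f),\hat h(f))=\int_\Omega\sum_k\frac{|\{i<n_k:g^k_i(f(\omega))\neq h_k(f(\omega))\}|}{n_k}\,m_k(\omega)\,\rd\omega<\int_\Omega\epsilon\sum_k m_k(\omega)\,\rd\omega=\epsilon,
\]
using $\sum_k m_k(\omega)=1$. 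Hence $M$ has approximable random endomorphisms.

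I expect the main obstacle to be the measurable-selection bookkeeping rather than any estimate: establishing that the orbits are Borel (so that $\Omega^2$ splits measurably by orbit), applying Jankov--von Neumann to untwist $\hat h$ into $\gamma$ and the countable family of representatives, and producing the jointly measurable, fiberwise-equidistributed index function $\iota$. Once these measurability points are secured, the final inequality is exactly the averaging computation of Proposition~\ref{p:one-approximable-endo} carried out fiber by fiber, with the countability of $O(M)$ ensuring that only countably many finite approximation schemes $\{g^k_i\}_{i<n_k}$ are ever needed.
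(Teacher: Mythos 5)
Your proposal is correct and follows essentially the same route as the paper's proof: use Jankov--von Neumann to untwist $\hat h$ by a random automorphism so that the residual endomorphism takes only the countably many values $h_k$, then approximate each $h_k$ on its piece of $\Omega^2$ by a fiberwise-equidistributed choice among the automorphisms $g^k_i$, exactly as in Proposition~\ref{p:one-approximable-endo}. The only differences are cosmetic: the paper applies the uniformization theorem once to the union $\bigcup_k E_k$ (avoiding the need to first prove the orbits Borel), and it asserts rather than explicitly constructs the fiberwise-equidistributed partition, for which your $\Phi_k$ construction is a valid realization.
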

\begin{proof}
Let $\{h_k\}_{k\in\mbN}$ be a family of endomorphisms of $M$ such that $\End(M)=\bigcup_{k\in\mbN}\Aut(M)h_k$. Given $\hat{h}\in\End(M)^{\Omega^2}$, we consider the sets
$$E_k=\{(x,g)\in \Omega^2 \times \Aut(M) : \hat{h}(x) = gh_k \},\ E=\bigcup_{k\in\mbN}E_k.$$
By choice of the $h_k$, the projection of $E$ to the first coordinate is all of $\Omega^2$. On the other hand, $E$ is a Borel set, as each set $E_k$ is the preimage of the closed set
$$\{(h,g)\in\End(M)\times\Aut(M):h(a)=g(h_k(a))\ \text{for all}\ a\in M\}$$
by the product map $\hat{h}\times\mathrm{id}\colon \Omega^2\times \Aut(M)\to \End(M)\times\Aut(M)$. We may therefore apply the Jankov--von Neuman theorem (Thm.~\ref{thm:JvN}) to obtain an element $\hat{g}\in \Aut(M)^{\Omega^2}$ such that for almost every $x$, the endomorphism $\hat{g}\inv\hat{h}(x)$ is one of the endomorphisms $h_k$.

We denote $\hat{h}' =\hat{g}\inv\hat{h}\in\End(M)^{\Omega^2}$, and we aim now to find $\hat{g}'\in\Aut(M)^{\Omega^2}$ that satisfies the condition of Definition~\ref{def:app-random-endo} with $\hat{h}'$. For each $k\in\mbN$, let $\Omega^2_k$ be the set of $x\in\Omega^2$ such that $\hat{h}'(x)=h_k$. Thus $\{\Omega^2_k\}_{k\in\mbN}$ is a partition of $\Omega^2=\Omega\times\Omega'$.

It is then enough to repeat the argument of Proposition~\ref{p:one-approximable-endo} inside each set $\Omega_k^2$. Namely, given $\epsilon>0$ we choose $g_0^k,\dots,g_{n_k-1}^k\in \Aut(M)$ such that
$$|\{i<n_k : g^k_i(a)\neq h_k(a)\}|<\epsilon n_k$$
for every $a\in M$. We then choose a finite measurable partition $\{A_i^k\}_{i<n_k}$ of $\Omega^2_k$ such that for almost every $\omega\in\Omega$ and every $i<n_k$, the measures of the slices $(A_i^k)_\omega\subseteq (\Omega^2_k)_\omega\subseteq \Omega'$ satisfy:
$$\mu_{\Omega'}((A_i^k)_\omega) = \frac{1}{n_k}\mu_{\Omega'}((\Omega^2_k)_\omega).$$ Finally, we define $\hat{g}'\in \Aut(M)^{\Omega^2}\subseteq \Aut(M^{\Omega^2})$ by:
$$\hat{g}'(x)=g^k_i\text{ iff }x\in A^k_i.$$
Hence for every $f\in M^\Omega$,
\begin{align*}
d(\hat{g}'(f),\hat{h}'(f)) & = \sum_{k\in\mbN}\int_{\Omega^2_k} \chi{\llbracket \hat{g}'(f)\neq \hat{h}'(f)\rrbracket}(x)\rd x \\
& = \sum_{k\in\mbN}\int_\Omega \sum_{i<n_k} \int_{(A^k_i)_\omega} \chi{\llbracket \hat{g}'(f)\neq \hat{h}'(f)\rrbracket}(\omega,\omega')\rd\omega' \rd\omega \\
& = \sum_{k\in\mbN}\int_{\Omega} \frac{1}{n_k}\mu_{\Omega'}((\Omega^2_k)_\omega) \sum_{i<n_k}\chi{\llbracket g^k_if\neq h_kf\rrbracket}(\omega)\rd\omega \\
& < \sum_{k\in\mbN}\frac{1}{n_k}\mu(\Omega^2_k)\cdot \epsilon n_k = \epsilon.
\end{align*}
Coming back to $\hat{h}$, we have $d(\hat{g}\hat{g}'(f),\hat{h}(f))<\epsilon$ for every $f\in M^\Omega$. This concludes the proof.
\end{proof}

The hypothesis that the action $\Aut(M)\actson\End(M)$ has countably many orbits is extremely restrictive. It fails already for the structure consisting of an equivalence relation with infinitely many infinite classes. In fact, the proposition is justified mainly by one example, that of the pure infinite set. Indeed, if $M$ is a countable set with no structure, then $O(M)$ is clearly countable, and the following proposition shows that $M$ has approximable endomorphisms.

\begin{prop}
Let $X$ be a set. For every injective map $\tau\colon X\to X$ and every $n\in\mbN$ there exist bijections $\sigma_i\colon X\to X$, $i<n$, such that for each $x\in X$, $|\{i<n:\sigma_i(x)\neq \tau(x)\}|\leq 1$.
\end{prop}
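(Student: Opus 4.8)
The plan is to analyze the structure of the injective map $\tau$ through its functional graph and then reduce the problem to the single troublesome case, that of a forward ray. Since $\tau$ is injective, in the directed graph on $X$ with an edge $x\to\tau(x)$ every vertex has out-degree $1$ and in-degree at most $1$, so the connected components are of exactly three kinds: finite cycles, bi-infinite chains indexed by $\mbZ$, and forward rays $x_0\to x_1\to x_2\to\cdots$, whose initial vertices $x_0$ range over $Y\coloneqq X\setminus\tau(X)$. On any cycle or bi-infinite chain, $\tau$ already restricts to a bijection of the component. I would therefore set each $\sigma_i$ equal to $\tau$ on the union of all cycles and chains; this is a bijection there and contributes no disagreements. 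It then remains to define the $\sigma_i$ on the union of the forward rays, handling each ray separately.

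On a single ray, enumerated as $x_0,x_1,x_2,\dots$ with $\tau(x_p)=x_{p+1}$, I would use a block-cyclic construction with offset $i$. For $i\in\{0,\dots,n-1\}$, partition the positions $\{0,1,2,\dots\}$ into consecutive blocks: an initial block $\{0,\dots,i-1\}$ of length $i$ (empty when $i=0$), followed by the length-$n$ blocks $\{i+tn,\dots,i+tn+n-1\}$ for $t\geq 0$. On each block $[a,b]$ define $\sigma_i(x_p)=x_{p+1}$ for $a\le p<b$ and $\sigma_i(x_b)=x_a$, i.e. the cyclic permutation of the block. This is a bijection of the ray, it agrees with $\tau$ except at the last position $b$ of each block (where $\tau(x_b)=x_{b+1}$ leaves the block while $\sigma_i(x_b)=x_a$ wraps around), and, crucially, the wrap-around of the first block supplies the image $x_0$ that $\tau$ omits. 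Since every block ends at a position congruent to $i-1\pmod n$, the set of positions where $\sigma_i$ disagrees with $\tau$ is exactly $\{p:p\equiv i-1\pmod n\}$.

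Finally I would assemble the conclusion. Extending each $\sigma_i$ by $\tau$ on the cycles and chains yields a bijection of $X$. For a point $x$ on a ray at position $p$, the index $i\in\{0,\dots,n-1\}$ with $\sigma_i(x)\neq\tau(x)$ is the unique one satisfying $i\equiv p+1\pmod n$, while for $x$ on a cycle or chain no $\sigma_i$ disagrees; in either case $|\{i<n:\sigma_i(x)\neq\tau(x)\}|\le 1$, as required.

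The step I expect to require the most care is the offset block-cyclic construction: one must verify that a single modification per block simultaneously restores surjectivity, recovers the omitted point $x_0$ (including the boundary case $i=0$, where the initial block is empty and $x_0$ is instead supplied by the first full block), and confines all disagreements to one residue class, so that the disagreement sets over $i<n$ are pairwise disjoint.
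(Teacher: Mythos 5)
Your proof is correct and follows essentially the same strategy as the paper's: decompose $X$ into the cycles and bi-infinite orbits (where each $\sigma_i$ is simply taken to be $\tau$) and the forward rays, and on each ray modify $\tau$ only at positions lying in a set $S_i$, with the $S_i$ pairwise disjoint over $i<n$, rerouting so as to recover the omitted initial point. The paper picks $n$ arbitrary pairwise disjoint increasing sequences of positions and shifts backwards along them, whereas you use the residue classes modulo $n$ together with a block-cyclic permutation; this is a concrete (and cleanly verifiable) instance of the same idea.
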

\begin{proof}
We partition $X$ into the orbits and semi-orbits of $\tau$ (an orbit being a set $\{x_j\}_{j\in\mbZ}$ where $\tau x_j = x_{j+1}$ for every $j\in\mbZ$; a semi-orbit being a set $\{x_j\}_{j\in\mbN}$ with $\tau x_j = x_{j+1}$ for every $j\in\mbN$ and such that $x_0$ has no preimage by $\tau$). On the orbits we set each $\sigma_i$, $i<n$, to be equal to $\tau$. Given a semi-orbit $\{x_j\}_{j\in\mbN}$, we choose $n$  increasing sequences $j^i=\{j^i_k\}_{k\in\mbN}\subseteq\mbN$, $i<n$, with the property that $j^i_k\neq j^{i'}_{k'}$ whenever $(i,k)\neq (i',k')$. Then we define $\sigma_i$ on the semi-orbit by the rule:
\[
\sigma_i(x_j)=\begin{cases}
\tau(x_j) = x_{j+1} & \text{ if } j\neq j^i_k\ \forall k\in\mbN \\
x_{j^i_{k-1}} & \text{ if } j=j^i_k,
\end{cases}
\]
with the convention that $j^i_{-1}=0$.

The resulting maps $\sigma_i\colon X\to X$ are bijections with the desired property.
\end{proof}

As a corollary  of the previous propositions and lemma we obtain:

\begin{theorem}
Let $T$ be the theory of pure infinite sets. Then $(T^R)_P$ is approximately $\aleph_0$-categorical.
\end{theorem}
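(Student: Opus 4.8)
The plan is to obtain the statement as a direct consequence of the three results immediately preceding it, so that the work consists only in checking two hypotheses and then chaining the implications. Let $M$ denote the countable model of $T$, namely a countably infinite set with no structure. Here the elementary self-embeddings are precisely the injective self-maps, so $\End(M)$ is the semigroup of injections $M\to M$ and $\Aut(M)=\Sym(M)$ is the group of all bijections. To apply Proposition~\ref{prop:TRP-approx-a0-cat-sufficient-conditions} I would verify that $M$ has approximable endomorphisms and that the orbit set $O(M)=\Aut(M)\backslash\End(M)$ is countable.

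For approximability, I would feed the preceding combinatorial proposition directly into the definition. Given $h\in\End(M)$, regarded as an injection $\tau$, and $\epsilon>0$, I choose any $n>1/\epsilon$ and take $g_0,\dots,g_{n-1}$ to be the bijections $\sigma_i$ supplied by that proposition. They satisfy $|\{i<n:g_i(a)\neq h(a)\}|\leq 1<\epsilon n$ for every $a\in M$, which is exactly the condition in the definition of approximability; hence $M$ has approximable endomorphisms.

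For the countability of $O(M)$, I would classify the $\Aut(M)$-orbits of injections by the cardinality of the complement of the image. Indeed, if $\tau'=g\tau$ for a bijection $g$, then $M\setminus\tau'(M)=g(M\setminus\tau(M))$, so $|M\setminus\tau(M)|=|M\setminus\tau'(M)|$. Conversely, if these complements have equal cardinality, one defines $g$ on $\tau(M)$ by $g(\tau(x))=\tau'(x)$ and extends it by any bijection between the two complements, obtaining $g\tau=\tau'$. Thus the orbits are indexed by $|M\setminus\tau(M)|$, which ranges over the countable set $\{0,1,2,\dots\}\cup\{\aleph_0\}$; so $O(M)$ is countable.

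With both hypotheses in hand, Proposition~\ref{prop:TRP-approx-a0-cat-sufficient-conditions} shows that $M$ has approximable random endomorphisms, and Lemma~\ref{l:app-ran-end-to-TRP} then yields at once that $(T^R)_P$ is approximately $\aleph_0$-categorical. I do not expect any genuine obstacle at this stage: all the real content has already been extracted, on the one hand into the combinatorial proposition on orbits and semi-orbits (which even gives the sharp bound $\leq 1$), and on the other into the Jankov--von Neumann selection argument carried out inside Proposition~\ref{prop:TRP-approx-a0-cat-sufficient-conditions}. The only mildly non-automatic points are the identification of $\End(M)$ with the injections and the orbit count above, both of which are elementary.
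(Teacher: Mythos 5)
Your proposal is correct and follows exactly the route the paper takes: the combinatorial proposition supplies approximable endomorphisms, the countability of $O(M)$ (which the paper asserts without the coset computation you spell out) feeds into Proposition~\ref{prop:TRP-approx-a0-cat-sufficient-conditions}, and Lemma~\ref{l:app-ran-end-to-TRP} finishes. The only difference is that you make explicit the classification of $\Aut(M)$-orbits on $\End(M)$ by the cardinality of the image's complement, which the paper leaves as ``clearly countable.''
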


Next we will show that the property of having approximable random endomorphisms is preserved under a number of constructions. For this we prefer to use the language of permutation groups. When we say that $G\actson X$ is a permutation group, we understand that $X$ is a countable set and $G$ is a closed subgroup of $\Sym(X)$. As is well-known, an $\aleph_0$-categorical structure $M$ can be identified with the \emph{oligomorphic} permutation group $\Aut(M)\actson M$. In that case, the semigroup $\End(M)$ is just the left-completion of the group $\Aut(M)$. It thus makes sense to say that an oligomorphic permutation group has or has not approximable random endomorphisms.

\begin{defin}
Let $G\actson M$ and $H\actson N$ be permutation groups, which we may denote simply by $G$ and $H$.
\begin{enumerate}
\item The direct product of $G$ and $H$ is the permutation group $G\times H\actson M\sqcup N$, where $(g,h)a= ga$ if $a\in M$ and $(g,h)b=hb$ if $b\in N$. We denote it simply by $G\times H$.
\item The wreath product of $G$ and $H$ is the permutation group $H\ltimes G^N \actson N \times M$, where $(h,g)(b,a) = (hb,g(b)a)$. We denote it by $G\wr H$. 
\item $G$ is a finite index supergroup of $H$ if $M=N$, $H\leq G$ and $[G:H]<\infty$.
\end{enumerate}
\end{defin}

\begin{rem}
In the semidirect product $H\ltimes G^N$, it is convenient to see $H$ as acting on $G^N$ on the \emph{right}, i.e., by $(g\cdot h)(b)=g(hb)\in G$, so that the group law is $(h,g)(h',g')=(hh',(g\cdot h')g')$. This is essential for the definition of the semidirect product if one considers \emph{semigroups} instead of groups, as is the case when passing from $G\wr H$ to its left-completion $\ov{G\wr H}$. The latter can be identified with $\ov{H}\ltimes\ov{G}^N$.
\end{rem}

\begin{prop}
Let $G\actson M$ and $H\actson N$ be oligomorphic permutation groups.
\begin{enumerate}
\item If $G$ and $H$ have approximable random endomorphisms, then so do $G\times H$ and $G\wr H$.
\item If $G$ is a finite index supergroup of $H$ and $H$ has approximable random endomorphisms, then so does $G$.
\item If $G$ has approximable random endomorphisms and $G\simeq H$ as topological groups, then $H$ has approximable random endomorphisms. 
\end{enumerate}
\end{prop}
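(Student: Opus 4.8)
The plan is to treat the three items separately, in each case reducing the approximation of a given random endomorphism to the corresponding property of the factors by working fibrewise over $\Omega^2$. Throughout I use that, for an oligomorphic $G\actson M$, the monoid $\End(M)$ is the left completion of $\Aut(M)$, so that elements of $\End(M)^{\Omega^2}$ inherit the algebraic decompositions of $\ov{G\wr H}$, $G\times H$ and $\ov G$ recorded in the Remark. The direct product is the easy case: since the two sorts $M$ and $N$ are $\emptyset$-definable (being unions of orbits of $G\times H$), an elementary self-embedding preserves them, so $\End(M\sqcup N)=\End(M)\times\End(N)$ and a random endomorphism splits as $\hat h=(\hat h_M,\hat h_N)$. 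I would approximate $\hat h_M$ and $\hat h_N$ separately, to error $\epsilon/2$ each, by $\hat g_M\in\Aut(M)^{\Omega^2}$ and $\hat g_N\in\Aut(N)^{\Omega^2}$, set $\hat g=(\hat g_M,\hat g_N)$, and split the difference set of $\hat g(F),\hat h(F)$ over the two measurable pieces $\{F\in M\}$ and $\{F\in N\}$ of $\Omega$.

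The wreath product is the heart of the argument, and the step I expect to be the main obstacle. Writing $K=N\times M$ and using $\ov{G\wr H}=\ov H\ltimes\ov G^N$, a random endomorphism is $\hat\phi(x)=(\hat\eta(x),\hat\gamma(x))$ with $\hat\eta\in\End(N)^{\Omega^2}$ and $\hat\gamma(x)\in\End(M)^N$; on $F=(F_N,F_M)\in K^\Omega$ it acts by $\hat\phi(F)(x)=(\hat\eta(x)F_N(\omega),\,\hat\gamma(x)(F_N(\omega))F_M(\omega))$. The difficulty is that one must approximate the \emph{whole field} $\hat\gamma$ of random endomorphisms of $M$ indexed by $N$, simultaneously and by a single $\hat\delta(x)\in\Aut(M)^N$. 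My plan is: first approximate $\hat\eta$ by $\hat\nu\in\Aut(N)^{\Omega^2}$ to error $\epsilon/2$; then enumerate $N=\{b_k\}_{k\in\mbN}$ and, for each $k$, approximate the single random endomorphism $\hat\gamma_k:=\hat\gamma(\,\cdot\,)(b_k)\in\End(M)^{\Omega^2}$ by some $\hat\delta_k\in\Aut(M)^{\Omega^2}$ to error $\epsilon\,2^{-(k+2)}$, using the property of $M$; finally set $\hat\psi(x)=(\hat\nu(x),(\hat\delta_k(x))_{k})$, which is measurable into $\Aut(N)\ltimes\Aut(M)^N$ because it is measurable in each coordinate. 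The point is that at each $\omega$ the base value $F_N(\omega)=b_k$ selects exactly one coordinate, so on the slice $\{F_N=b_k\}\times\Omega'$ the $M$-component of the difference set of $\hat\psi(F),\hat\phi(F)$ is precisely that of $\hat\delta_k(F_M),\hat\gamma_k(F_M)$; a union bound then gives $d(\hat\psi(F),\hat\phi(F))<\epsilon/2+\sum_k\epsilon\,2^{-(k+2)}=\epsilon$, uniformly in $F$. Making the per-coordinate error budget summable is exactly what lets the infinitely many simultaneous approximations combine into one.

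For a finite index supergroup ($M=N$, $H\leq G$, $[G:H]=m$), I would first establish the decomposition $\ov G=\bigcup_{j<m}g_j\ov H$ for coset representatives $G=\bigsqcup_{j<m}g_jH$: any $\phi\in\ov G$ is a pointwise limit of a net in $G$, which by pigeonhole lies cofinally in a single coset $g_jH$, whence $g_j\inv\phi\in\ov H$ (note $g_j\inv\phi$ is still an injective, relation-preserving limit of $H$-elements). Given $\hat\phi\in\ov G^{\Omega^2}$, the sets $\hat\phi\inv(g_j\ov H)$ are Borel and cover $\Omega^2$ (as $\ov H$ is closed), so I can measurably select $j(x)$ and write $\hat\phi(x)=g_{j(x)}\hat\eta(x)$ with $\hat\eta\in\ov H^{\Omega^2}$. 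Approximating $\hat\eta$ by $\hat\nu\in H^{\Omega^2}$ via the property of $H$ and setting $\hat\psi(x)=g_{j(x)}\hat\nu(x)\in G^{\Omega^2}$ finishes the job: since left translation by the automorphism $g_{j(x)}$ is a bijection, it preserves the difference set, so $d(\hat\psi(F),\hat\phi(F))=d(\hat\nu(F),\hat\eta(F))<\epsilon$ for every $F$.

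The topological-isomorphism invariance is a second, more conceptual, difficulty. Here $\theta\colon G\to H$ is only an isomorphism of topological groups, so it extends to a monoid isomorphism and homeomorphism $\ov\theta\colon\End(M)\to\End(N)$ restricting to $\theta$ on automorphisms, and hence induces bijections $\End(M)^{\Omega^2}\to\End(N)^{\Omega^2}$ and $\Aut(M)^{\Omega^2}\to\Aut(N)^{\Omega^2}$ by post-composition. The one datum \emph{not} transported by $\ov\theta$ is the metric itself, equivalently the family of point-stabilizers, since $\theta$ need not send stabilizers of points of $M$ to stabilizers of points of $N$. To bridge this I would use that $G\actson M$ and the action $G\actson N$ obtained from $H\actson N$ through $\theta$ are two faithful oligomorphic actions of the same group, so by Ahlbrandt--Ziegler the structures $M$ and $N$ are bi-interpretable; each point of $N$ is then interdefinable with a finite imaginary tuple over $M$ and conversely, and I would verify that the property passes through such interpretations. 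Establishing that having approximable random endomorphisms is genuinely an invariant of the topological group, and not of the chosen oligomorphic representation, is the real content of this last item.
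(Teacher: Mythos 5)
Your proposal is correct and follows essentially the same route as the paper in all four steps: splitting over the two $\emptyset$-definable sorts for the direct product, a summable error budget $\epsilon 2^{-(k+2)}$ over an enumeration of $N$ for the wreath product, the coset decomposition $\ov{G}=\bigsqcup_{j<m} g_j\ov{H}$ with a measurable choice of representative for finite index, and Ahlbrandt--Ziegler bi-interpretability for topological isomorphism. The only place you stop short is the final verification that the property transfers through the interpretation $N\cong D/E$ with $D\subseteq M^n$; the paper's version of this is the short observation that it suffices to make $\mu\llbracket \hat{g}(f)\text{ not }E\text{-related to }\hat{h}(f)\rrbracket<\epsilon$ for $f\in D^\Omega$, which follows by approximating coordinatewise, since inequivalent tuples are in particular unequal in some coordinate.
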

\begin{proof}\ 
\begin{enumerate}[wide]
\item Suppose that $G$ and $H$ have approximable random endomorphisms. For any Polish group $A$, let $\ov{A}$ be the left-completion of $A$. We have that $\ov{G\times H} = \ov{G}\times \ov{H}$ and so $\ov{G\times H}^{\Omega^2} = \ov{G}^{\Omega^2} \times \ov{H}^{\Omega^2}$. Given $(\hat{g},\hat{h}) \in \ov{G}^{\Omega^2} \times \ov{H}^{\Omega^2}$ and $\epsilon > 0$, we can by assumption find $\hat{g}' \in G^{\Omega^2}$ and $\hat{h}' \in H^{\Omega^2}$ such that for any $f_0 \in M^\Omega$ and $f_1 \in N^\Omega$, $d(\hat{g}(f_0),\hat{g}'(f_0)) < \epsilon/2$ and $d(\hat{h}(f_1),\hat{h}'(f_1)) < \epsilon/2$. An element $f$ of $(M \sqcup N)^{\Omega}$ can be specified (up to measure zero) by a set $B\in\mcB(\Omega)$ together with measurable functions $f_0 : \Omega \to M$ and $f_1 : \Omega \to N$ by letting $f$ be $f_0$ on $B$ and $f_1$ on $\Omega\setminus B$. We then have that
    \begin{align*}
      d((\hat{g},\hat{h})(f),(\hat{g}',\hat{h}')(f)) &\leq d(\hat{g}(f_0),\hat{g}'(f_0)) + d(\hat{h}(f_1),\hat{h}'(f_1)) < \epsilon.
    \end{align*}
    So since we can do this for any endomorphism $(\hat{g},\hat{h})$ and $\epsilon > 0$, we have that $G \times H$ has approximable random endomorphisms.

    For the wreath product, we have that $\ov{G\wr H}^{\Omega^2} =  \ov{H}^{\Omega^2} \ltimes (\ov{G}^N)^{\Omega^2}$. An element $(\hat{h},\hat{g})$ (with $\hat{h} \in \ov{H}^{\Omega^2}$ and $\hat{g} \in (\ov{G}^N)^{\Omega^2}$) acts on $(f_0,f_1) \in N^{\Omega^2} \times M^{\Omega^2} = (N \times M)^{\Omega^2}$ by
      \[
        (\hat{h},\hat{g})(f_0,f_1)(\bar\omega) = (\hat{h}(\bar\omega)f_0(\bar\omega),\hat{g}(\bar\omega)[f_0(\bar\omega)]f_1(\bar\omega))\in N\times M
      \]
      for $\bar\omega \in \Omega^2$.  Fix $(\hat{g},\hat{h}) \in \ov{G\wr H}^{\Omega^2}$. We will now think of $(\ov{G}^N)^{\Omega^2}$ as $(\ov{G}^{\Omega^2})^N$, so that for any $b \in N$, $\hat{g}[b]$ is an element of $\ov{G}^{\Omega^2}$.
      
	Let $(b_i)_{i \in \mbN}$ be a fixed enumeration of $N$. By assumption, $H$ and $G$ have approximable random endomorphisms. Therefore, for any $\epsilon > 0$, we can find $\hat{h}' \in H^{\Omega^2}$ and $\hat{g}' \in (G^{\Omega^2})^N$ such that, for any $f_0 \in N^\Omega$, $d(\hat{h}(f_0),\hat{h}'(f_0)) < \frac{1}{2}\epsilon$ and, for any $f_1 \in M^\Omega$ and $i \in \mbN$, $d(\hat{g}[b_i](f_1),\hat{g}'[b_i](f_1)) < 2^{-i-2}\epsilon$.

      Fix some element $f = (f_0,f_1) \in (M\times N)^{\Omega} = M^{\Omega}\times N^{\Omega}$. A sufficient condition for $(\hat{h},\hat{g})(f_0,f_1)(\bar\omega) = (\hat{h}',\hat{g}')(f_0,f_1)(\bar\omega)$ is to have $\hat{h}(\bar\omega)f_0(\bar\omega) = \hat{h}'(\bar\omega)f_0(\bar\omega)$ and $\hat{g}[b_i](\bar\omega)f_1(\bar\omega)=\hat{g}'[b_i](\bar\omega)f_1(\bar\omega)$ for all $i\in\mbN$. By construction, the set
      \[
        A = \{\bar\omega \in \Omega^2 : \hat{h}(\bar\omega)(f_0(\bar\omega)) = \hat{h}'(\bar\omega)(f_0(\bar\omega))\}
      \]
      has measure greater than $1-\frac{1}{2}\epsilon$. Furthermore, by construction, the set
      \[
B = \{\bar\omega \in \Omega^2 : (\forall i \in \mbN)\hat{g}'[b_i](\bar{\omega})f_1(\bar{\omega}) = \hat{g}[b_i](\bar{\omega})f_1(\bar{\omega})\}
\]
has measure greater than $1-\frac{1}{2}\epsilon$. Therefore $\mu(A\cap B) > 1 -\epsilon$, and thus $d((\hat{g},\hat{h})(f),(\hat{g}',\hat{h}')(f)) < \epsilon$.

Since we can do this for any $(\hat{g},\hat{h}) \in \ov{G \wr H}^{\Omega^2}$ and any $\epsilon > 0$, we have that the structure on $N\times M$ corresponding to $G \wr H$ has approximable random endomorphisms.

\item Assume that $G$ is a finite index supergroup of $H$, a group with approximable random endomorphisms. Let $g_1,\dots,g_n$ be elements of $G$ such that $g_1 H,\dots,g_n H$ are the cosets of $H$ in $G$. It is easy to see that $\ov{G} = g_1 \ov{H}\sqcup \dots \sqcup g_n \ov{H}$. Let $\hat{g}$ be an element of $\ov{G}^{\Omega^2}$. We can find a measurable function $\hat{i} : \Omega^2 \to \{g_1,\dots,g_n\}$ and an element $\hat{h} \in \ov{H}^{\Omega^2}$ such that $\hat{g} = \hat{i}\hat{h}$. Fix $\epsilon > 0$, and let $\hat{h}' \in H^{\Omega^2}$ be such that for every $f \in M^{\Omega}$, $d(\hat{h}(f),\hat{h}'(f)) < \epsilon$. We then have that for every $f \in M^\Omega$, $d(\hat{g}(f),\hat{i}\hat{h}'(f)) < \epsilon$ as well. Since we can do this for any $f \in M^\Omega$ and $\epsilon > 0$, we have that $G$ has approximable random endomorphisms.
    
\item Suppose $G$ and $H$ are isomorphic as topological groups. As is well-known (cf.\ \cite{ahlzie84}), since $G$ and $H$ are oligomorphic, this means that the structures associated to them are bi-interpretable, so that in particular there is a $G$-invariant equivalence relation $E$ on some $G$-invariant subset $D\subseteq M^n$ such that $G\actson D/E$ and $H\actson N$ are isomorphic as left actions (i.e., as permutation groups, up to identifying $G$ with a subgroup of $\Sym(D/E)$). Thus, to see that $H$ has approximable random endomorphisms, it suffices to check that for every $\epsilon>0$ and $\hat h\in \ov{G}^{\Omega^2}$ there is $\hat g\in G^{\Omega^2}$ such that for every $f\in D^\Omega$ we have:
    \[\mu\llbracket \hat{g}(f)\text{ is not $E$-related to }\hat{h}(f)\rrbracket < \epsilon.\]
This holds clearly if $G$ has approximable random endomorphisms.\qedhere
\end{enumerate}
\end{proof}

One can check that if $M$ and $N$ are countable models of theories in $\mcT$, then the theories corresponding to the permutation groups $\Aut(M)\times \Aut(N)$ and $\Aut(M)\wr \Aut(N)$ are also in~$\mcT$: the case of the direct product is easy, and for the wreath product notice that the structure associated to $\Aut(M)\wr \Aut(N)$ is interpretable in the structure associated to $\Aut(M)\times \Aut(N)$. Similarly, if the theory of $N$ is in $\mcT$ and $\Aut(M)$ is a finite (or infinite) index supergroup of $\Aut(N)$, then the theory of $M$ is in $\mcT$, because $M$ is a reduct of $N$.

Starting from the pure countable infinite set and using the previous constructions (as well as bi-intepretations), we thus get, using Lemma~\ref{l:app-ran-end-to-TRP}, a rich family of examples of theories $T\in\mcT$ for which $(T^R)_P$ is approximately $\aleph_0$-categorical.

For instance, according to results of Lachlan~\cite{lachlanTree}, and more explicitly Bodor \cite[Thm.~3.44]{bodorClass}, the class of structures that can be obtained from the trivial structure by taking direct products, wreath products of the form $G\wr S_\infty$, and finite index supergroups, is exactly the class of $\aleph_0$-categorical \emph{monadically stable} structures. So these all have approximable random endomorphisms. Using arbitrary wreath products and bi-interpretations, we see that many further basic examples also have approximable random endomorphisms (e.g., sets with several equivalence relations with infinitely many classes whose classes intersect non-trivially: these are not monadically stable).

As mentioned in the introduction, it is natural in view of our results to ask whether the class $\mcT_0$ of theories $T\in\mcT$ such that $(T^R)_P$ is approximately $\aleph_0$-categorical is precisely the class $\mcD$ of theories with disintegrated strictly minimal sets. We end with two subsidiary questions.

\begin{question}\ 
\begin{enumerate}
\item Is the class $\mcT_0$ closed under interpretations?
\item Is the class $\mcT_0$ closed under finite covers?
\end{enumerate}
\end{question}

Following our previous strategy, it might also be useful to consider these questions for the property of having approximable random endomorphisms.

\section*{Appendix}

We give a proof of Theorem~\ref{thm:beautiful-pair-of-TR} based on the idea of the argument of \cite[Thm.~4.1]{benkei}. Here it will be practical to consider formulas involving parameters from the event sort, i.e., expressions such as $\mbP([\varphi(x)]\sqcap B)$ where $B\in\mcB(\Omega^2)$. Any such expression can be replaced by a formula of the form $\mbP[\varphi(x)\wedge (f=f')]$, for appropriate parameters $f,f'$ from the main sort.

Let $(M,N)$ be a beautiful pair of countable models of $T\in\mcT$. Let $t\in (M^{\Omega^2})^n$ be a finite tuple, and take a type $p\in S_1(N^\Omega t)$. We want to show that $p$ is realized in $M^{\Omega^2}$. Let $C\subseteq M^n$ be the image of $t$ (seen as a Borel map $t\colon \Omega^2\to M^n$), which is a countable set. For each $c\in C$ we denote $X_c=S_1(Nc)$. We also let $A_c=t\inv(c)\in \mcB(\Omega^2)$. In addition, we fix an extension $p'\in S_1(M^{\Omega^2})$ of $p$ to a type over $M^{\Omega^2}$.

For each $c\in C$ and $q\in X_c$, let $\{\varphi_q^i(x)\}_{i\in\mbN}$ be a sequence of consistent formulas contained in $q$ with the property that $\varphi_q^{i+1}(x) \vdash \varphi_q^i(x)$ for each $i\in\mbN$ and $\{\varphi_q^i(x)\}_{i\in\mbN}\vdash q(x)$. For any $B\in\mcB(\Omega)$, let
\[
\nu_q(B) = \lim_{i \to \infty} \mbP([\varphi_q^i(x)] \sqcap A_c \sqcap B)^{p'}.
\]
Note that since $\nu_q$ is the limit of a monotonically decreasing sequence of measures on $\mcB(\Omega)$ that are absolutely continuous with regards to $\mu$, we have that $\nu_q$ is itself a measure on $\mcB(\Omega)$ that is absolutely continuous with regards to $\mu$. 

\begin{claim*}
For each $c\in C$, every formula $\psi(x,bc)$ with $b\in N^k$ and any $B \in \mcB(\Omega)$,
\[
\mbP([\psi(x,bc)] \sqcap B \sqcap A_c)^{p'} = \sum \{ \nu_q(B) : q\in X_c, \psi(x,b) \in q\},
\]
where we are conflating $bc$ with the corresponding tuple of constant functions in $M^{\Omega^2}$.
\end{claim*}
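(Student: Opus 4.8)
The plan is to read the claim as a disintegration statement over the type space $X_c = S_1(Nc)$: for fixed $c$ and $B$, the left-hand side, viewed as a function of $\psi$, is a finitely additive set function on the clopen algebra of $X_c$, which I will promote to a genuine Borel measure whose atoms are exactly the $\nu_q(B)$. The claim then expresses that this measure is purely atomic, which holds because $X_c$ is countable.

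First I would fix $c \in C$ and $B \in \mcB(\Omega)$ and set
\[
\lambda(\psi) = \mbP([\psi(x,bc)] \sqcap B \sqcap A_c)^{p'} \in \mbR_{\geq 0},
\]
where $\psi(x,b)$ ranges over formulas over $Nc$. As noted at the start of the Appendix, the event-sort parameters $B$ and $A_c$ can be absorbed into the formula via suitable constant parameters, so each $\lambda(\psi)$ is genuinely the value at $p'$ of a single predicate $\mbP[\cdot]$. Using the measure axioms of $T^R$ (positivity and finite additivity of $\mbP$), together with the fact that $[\psi(x,bc)] = [\psi_1(x,bc)] \sqcup [\psi_2(x,bc)]$ whenever $T \models \psi \leftrightarrow \psi_1 \vee \psi_2$ with $\psi_1 \wedge \psi_2$ inconsistent, one checks that $\lambda$ descends to a well-defined, nonnegative, finitely additive set function on the algebra of clopen sets $[\psi] = \{q \in X_c : \psi(x,b) \in q\}$, of total mass $\lambda(\top) = \mu(B \cap A_c) < \infty$.

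The key point is to upgrade $\lambda$ to a countably additive measure. Since $T$ and $Nc$ are countable, $X_c$ is a compact metrizable Stone space whose clopens generate its Borel $\sigma$-algebra; and since $T$ is $\aleph_0$-stable, $X_c$ is moreover \emph{countable}. Compactness upgrades finite to countable additivity on the clopen algebra for free (any partition of a clopen set into countably many clopens is in fact finite), so by the Carath\'eodory--Hahn extension theorem $\lambda$ extends uniquely to a finite Borel measure $\hat\lambda$ on $X_c$. Continuity from above then identifies its atoms: as $\{\varphi_q^i\}_i$ is decreasing and isolates $q$, we have $\bigcap_i [\varphi_q^i] = \{q\}$, whence
\[
\hat\lambda(\{q\}) = \lim_{i\to\infty} \hat\lambda([\varphi_q^i]) = \lim_{i\to\infty} \lambda(\varphi_q^i) = \nu_q(B).
\]
Finally, because $X_c$ is countable, $\hat\lambda$ is purely atomic, so for every $\psi(x,b)$ we get
\[
\lambda(\psi) = \hat\lambda([\psi]) = \sum_{q \in [\psi]} \hat\lambda(\{q\}) = \sum\{\nu_q(B) : q \in X_c,\ \psi(x,b) \in q\},
\]
which is the claim.

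I expect the main obstacle to be the passage from finite to countable additivity and the attendant bookkeeping. One must verify that the event-sort parameters really allow $\lambda$ to be treated as the restriction of an honest $T^R$-predicate, so that finite additivity is legitimate, and one must use $\aleph_0$-stability in an essential way: it is precisely countability of $X_c$ that forces $\hat\lambda$ to be atomic, and without it $\hat\lambda$ could carry a continuous part so that the displayed sum would fail to recover $\lambda$.
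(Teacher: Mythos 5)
Your argument is correct, but it takes a genuinely different route from the paper's. The paper proves the claim by transfinite induction on the Cantor--Bendixson rank of $[\psi]$ viewed as a clopen subset of $X_c$: the base case treats finite sets of isolated types directly, and the inductive step reduces to Cantor--Bendixson degree $1$ and telescopes along the chain $\{\varphi_q^i\}_i$ of the unique top-rank type $q$ in $[\psi]$, so that the residual mass $\lim_i \mbP([\varphi_q^i(x)]\sqcap D)^{p'}$ is recognized as $\nu_q(B)$. You instead package the left-hand side as a finitely additive set function on the clopen algebra of $X_c$, get countable additivity for free from compactness, extend by Carath\'eodory, identify the atoms via continuity from above along $\bigcap_i[\varphi_q^i]=\{q\}$, and finish by pure atomicity of a finite Borel measure on a countable space. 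Both arguments rest on the same consequence of $\aleph_0$-stability --- smallness of $X_c$ --- in two equivalent guises for compact second-countable spaces: the paper uses scatteredness (ordinal-valued CB rank), you use countability. Your route is cleaner and isolates the real content (a finite Borel measure on a countable compact space is the sum of its atoms) at the cost of invoking the extension theorem; the paper's induction is more self-contained and exhibits explicitly the telescoping decomposition of $[\psi]$ into $\{q\}$ and the lower-rank pieces $[\eta_j]$. The two points you should spell out, and which you already flag, are that $\lambda$ is well defined on clopens (formulas defining the same clopen of $X_c$ yield predicates that agree on all of $M^{\Omega^2}$, hence at $p'$) and that disjointness of clopens in $X_c$ is equivalent to inconsistency of the conjunction over $M$, which is what licenses the additivity of $\mbP$; with those in place the proof is complete.
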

\begin{proof}
We proceed by induction on the Cantor--Bendixson rank of $\psi(x,bc)$, thought of as a subset of $X_c$. We will abridge $d=bc$, $D=B\sqcap A_c$.

If $\psi(x,d)$ has Cantor--Bendixson rank $0$, then it is a finite set of isolated types in $X_c$. Let these types be $q_0,\dots,q_{m-1}$. For each $i<m$ let $\chi_{i}(x)$ be a formula isolating $q_i$. By construction, we must have that $\nu_{q_i}(B) = \mbP([\chi_i(x)]\sqcap D)^{p'}$. Therefore, $\mbP([\psi(x,d)] \sqcap D)^{p'} = \sum_{i<k}\nu_{q_i}(B)$, as required.

Suppose that the statement has been shown for all formulas with Cantor--Bendixson rank less than $\alpha$, and that $\psi(x,d)$ has rank $\alpha$. By decomposing $\psi(x,d)$ into finitely many formulas, we may assume that it has Cantor--Bendixson degree $1$. Let $q\in X_c$ be the unique type of Cantor-Bendixson rank $\alpha$ containing $\psi(x,d)$. By construction, if $r \ni \psi(x,d)$ is a type not equal to $q$, then for some $i\in\mbN$, $\varphi_q^{i}(x) \notin r$. For each $i\in\mbN$, let $\eta_i(x) = \psi(x,d) \wedge \varphi_q^i(x) \wedge \neg \varphi_q^{i+1}(x)$. Note that each $\eta_i(x)$ has Cantor--Bendixson rank strictly less than $\alpha$ and that any type containing $\psi(x,d)$ is either $q$ or contains $\eta_i(x)$ for some $i\in\mbN$. Also note that for sufficiently large $i$, $\psi(x,d)\wedge \varphi^i_q(x) \equiv \varphi^i_q(x)$. For any $i\in\mbN$, we clearly have
  \[
    \mbP([\psi(x,d)]\sqcap D)^{p'} = \mbP([\psi(x,d)\wedge \varphi_q^i(x)]\sqcap D)^{p'}+ \sum_{j<i} \mbP([\eta_j(x)]\sqcap D)^{p'}.
  \]
By the induction hypothesis we have, for each $j\in\mbN$,
  \[
    \mbP([\eta_j(x)]\sqcap D)^{p'} = \sum\{\nu_r(B) : r\in X_c, \eta_j(x) \in r\}.
  \]
Therefore,
  \begin{align*}
    \mbP([\psi(x,d)]\sqcap D)^{p'} &= \lim_{i \to \infty} \mbP([\psi(x,d) \wedge \varphi_q^i(x)]\sqcap D)^{p'} + \sum_{j<i}\mbP([\eta_j(x)]\sqcap D])^{p'} \\
                                            &= \left[\lim_{i \to \infty} \mbP([\psi(x,d)\wedge \varphi_q^i(x)]\sqcap D)^{p'} \right] +  \sum_{j < \omega} \mbP([\eta_j(x)]\sqcap D)^{p'}  \\
                                            &=  \left[\lim_{i \to \infty} \mbP([\varphi_q^i(x)]\sqcap D)^{p'} \right] +  \sum_{j < \omega} \mbP([\eta_j(x)]\sqcap D)^{p'} \\
                                            & = \nu_q(B) + \sum_{j < \omega}\sum\{\nu_r(B) : r\in X_c, \eta_j(x) \in r\} \\
                                            & = \sum\{\nu_r(B) : r\in X_c, \psi(x,d)\in r\},                                              
  \end{align*}
  as required.
\end{proof}

Now we can build a realization of $p$, as follows. For each $c\in C$ and $q\in X_c$, there is a realization $a_q$ of $q$ in $M$, because $(M,N)$ is a beautiful pair. On the other hand, by the Radon--Nikodym theorem, there is a measurable function $\delta_q\colon \Omega \to [0,1]$ such that $\nu_q(B) = \int_B \delta_q\rd\mu$ for every $B\in\mcB(\Omega)$. Note that if we specialize the claim in a true formula $\psi$, we obtain $\mu(B\cap A_c)=\sum_{q\in X_c}\nu_q(B) = \int_B\sum_{q\in X_c}\delta_q\rd\mu$. We deduce that
\[
\sum_{q\in X_c} \delta_q(\omega) = \mu_{\Omega'}((A_c)_\omega)
\]
for almost every $\omega\in\Omega$, where $(A_c)_\omega = \{\omega'\in\Omega : (\omega,\omega')\in A_c\}$ is the slice of $A_c$ at $\omega\in\Omega$.

For each $c\in C$, choose a measurable partition $A_c=\bigsqcup_{q\in X_c}A_q$ such that, for almost every $\omega\in\Omega$,
\[
\mu((A_q)_\omega) = \delta_q(\omega).
\]
We have thus a partition $\Omega^2=\bigsqcup_{c\in C,q\in X_c}A_q$. We let then $f \in M^{\Omega^2}$ be the element defined by
\[
f(\omega,\omega') = a_q\text{ if and only if }(\omega,\omega')\in A_q,
\]
and we claim that $f$ realizes $p$.

By quantifier elimination, it suffices to check that $\mbP[\psi(x,st)]^p = \mu\llbracket\psi(f,st)\rrbracket$ for all $s\in (N^k)^\Omega$. To that end, given such an $s$, let us consider the sets $B_b = \{\omega\in\Omega : s(\omega)=b\}$ for each $b\in N^k$. Then, by the claim, we have that 
\[
  \mbP([\psi(x,st)])\sqcap B_b\sqcap A_c])^{p'} = \sum\{\nu_q(B_b) : q\in X_c, \psi(x,bc) \in q\}
\]
for each $b\in N^k$, and so
\[
  \mbP[\psi(x,st)]^p = \mbP[\psi(x,st)]^{p'} = \sum_{b\in N,c\in C}\sum\{\nu_q(B_b) : q\in X_c, \psi(x,bc) \in q\}.
\]
On the other hand, we have that
\begin{align*}
\mu\llbracket\psi(f,st)\rrbracket = & \sum_{b\in N,c\in C}\sum\left\{ \mu(B_b\cap A_q) : q\in X_c,M\models \psi(a_q,bc) \right\} \\
& \sum_{b\in N,c\in C}\sum\left\{ \int_{B_b}\delta_q\rd\mu : q\in X_c,\psi(x,bc) \in q \right\},
\end{align*}
which by construction is the same quantity. This concludes the proof of Theorem~\ref{thm:beautiful-pair-of-TR}.

\enlargethispage{0.3in}

\bibliographystyle{amsalpha}
\bibliography{biblio}

\end{document}